\documentclass[12pt,a4paper,leqno,verbatim]{amsart}
 \def\registered{
 {\ooalign{\hfil\raise .00ex\hbox{\scriptsize R}\hfil\crcr\mathhexbox20D}}}

\newcounter{minutes}\setcounter{minutes}{\time}
\divide\time by 60
\newcounter{hours}\setcounter{hours}{\time}
\multiply\time by 60 \addtocounter{minutes}{-\time}

\usepackage{amssymb}
\usepackage{hyperref}
\usepackage[T1]{fontenc}
\usepackage{graphicx}
\date{}
\newfont{\cyrilic}{wncyr10 scaled 1000}
\title[On generalized trigonometric functions with two parameters]{On generalized trigonometric functions \\ with two parameters }
\author[B. A. Bhayo]{Barkat Ali Bhayo}
\address{Department of Mathematics, University of Turku,
FI-20014 Turku, Finland} \email{barbha@utu.fi}

\author[M. Vuorinen]{Matti Vuorinen}
\address{Department of Mathematics, University of Turku,
FI-20014 Turku, Finland} \email{vuorinen@utu.fi}
\newcommand{\comment}[1]{}

\swapnumbers
\theoremstyle{plain}

\newtheorem{theorem}[equation]{Theorem}
\newtheorem{lemma}[equation]{Lemma}

\newtheorem{remark}[equation]{Remark}

\newtheorem{conjecture}[equation]{Conjecture}

\numberwithin{equation}{section}

\pagestyle{headings}
\setcounter{page}{1}
\addtolength{\hoffset}{-1.15cm}
\addtolength{\textwidth}{2.3cm}
\addtolength{\voffset}{0.45cm}
\addtolength{\textheight}{-0.9cm}

\begin{document}
\font\fFt=eusm10 
\font\fFa=eusm7  
\font\fFp=eusm5  
\def\K{\mathchoice
{\hbox{\,\fFt K}}
{\hbox{\,\fFt K}}
{\hbox{\,\fFa K}}
{\hbox{\,\fFp K}}}

\def\thefootnote{}
\footnotetext{ \texttt{\tiny File:~\jobname .tex,
          printed: \number\year-\number\month-\number\day,
          \thehours.\ifnum\theminutes<10{0}\fi\theminutes}
} \makeatletter\def\thefootnote{\@arabic\c@footnote}\makeatother

\maketitle

\begin{abstract}
The generalized $p$-trigonometric and ($p,q$)-trigonometric functions were introduced by P. Lindqvist and S. Takeuchi, respectively. We prove some inequalities and present a few conjectures for the ($p,q$)-functions. \end{abstract}

{\bf 2010 Mathematics Subject Classification}: 41A17, 33C99, 33B99

{\bf Keywords and phrases}: Eigenfunctions of $p$-Laplacian, $\sin_{p,q}$, generalized trigonometric function.
\vspace{.5cm}

\section{Introduction}
During the past decade, many authors have studied the generalized trigonometric functions introduced by P. Lindqvist in a highly cited paper \cite{lp}. These so called $p$-trigonometric functions $p>1$,
which agree for $p=2$ with the familiar functions, have also been extended in various directions. The recent literature on these
functions includes several dozens of papers, see the bibliographies of
\cite{bem, dm, le}. Most recently, S. Takeuchi \cite{t} has taken one step further and investigated the $(p,q)$-trigonometric functions
depending on two parameters instead of one, and which for $p=q$
reduce to the $p$-functions of Lindqvist. See also D. E. Edmunds, P. Gurka and J. Lang \cite{egl}.

P. Dr\'abek and R. Man\'asevich \cite{dm} considered the following ($p,q$)-eigenvalue problem with the Dirichl\'et boundary condition.
Let $\phi_p(x)=|x|^{p-2}x.$ For $T,\lambda>0$ and $p,q>1$

\begin{equation*} \label{mybvp}
\left\{\begin{array}{lll}\displaystyle(\phi_p(u'))'+
\lambda\,\phi_q(u)=0,\quad t\in(0,T),\\
                   \displaystyle u(0)=u(T)=0.\end{array}\right.
\end{equation*}
They found the complete solution to this problem. This solution is
also given in \cite[Thm 2.1]{t}. In particular, for $T=\pi_{p,q}$ the function $u(t)\equiv \sin_{p,q}(t)$ is
a solution to this problem with $\lambda=\frac{p}{q}(p-1)$ where
$$\pi_{p,q}=\int_0^1{(1-t^q)^{-1/p}}\,dt=
\frac{2}{q}B\left(1-\frac{1}{p},\frac{1}{q}\right).$$
If $p=2\,,$ this eigenvalue-boundary value problem reduces
to the familiar boundary value problem whose solution is the
usual $\sin$ function.
Next, we will give an alternative equivalent definition of the
function $\sin_{p,q}\,,$ which is carried out in two
steps: in the first step we  define the inverse function of $\sin_{p,q}$, denoted by ${\rm arcsin}_{p,q}\,,$ and in the second step the function itself.
For $x\in[0,1]$, set
$$F_{p,q}(x)=\int_0^x{(1-t^q)}^{-1/p}\,dt\,.$$
Then $F_{p,q}:[0,1]\to [0,\pi_{p,q}/2]$ is an increasing homeomorphism, denoted by ${\rm arcsin}_{p,q}$, and therefore its inverse
$$\sin_{p,q}\equiv F_{p,q}^{-1}\,,$$
is defined on the the interval $[0,\pi_{p,q}/2]$.
Below we discuss also other related functions such as ${\arccos}_{p,q},$ and ${\rm arsinh}_{p,q}$.

For the expression of the function ${\rm arcsin}_{p,q}$
in terms of well-known
special functions we introduce some notation.
The \emph{Gaussian hypergeometric function} is the
analytic continuation to the slit plane $\mathbf{C}\setminus[1,\infty)$ of the series
$$F(a,b;c;z)={}_2F_1(a,b;c;z)=\sum^\infty_{n=0}\frac{(a,n)(b,n)}
{(c,n)}\frac{z^n}{n!},\qquad |z|<1,$$
for given complex numbers $a,b$ and $c$ with $c\neq0,-1,-2,\ldots$ .
Here $(a,0)=1$ for $a\neq 0$, and $(a,n)$ is the \emph{shifted factorial function} or the \emph{Appell symbol}
$$(a,n)=a(a+1)(a+2)\cdots(a+n-1)$$
for $n=1,2,\dots \,.$
The hypergeometric function has numerous special functions as its
special or limiting cases, see \cite{AS}.

For ${\rm Re}\, x>0$, ${\rm Re}\, y>0$, we define the classical \emph{gamma function} $\Gamma(x)$, the $\emph{psi function}$ $\psi(x)$ and the \emph{beta function} $B(x,y)$ by
$$\Gamma(x)=\int^\infty_0 e^{-t}t^{x-1}\,dt,\,\,\psi(x)=\frac{\Gamma^{'}(x)}{\Gamma(x)},\,\,
B(x,y)=\frac{\Gamma(x)\Gamma(y)}{\Gamma(x+y)},$$
respectively.

For $x\in I=[0,1]$ the function ${\rm arcsin}_{p,q}$
considered above can
be expressed in terms of the hypergeometric function as follows
$$
{\rm arcsin}_{p,q}\,x=\int^x_0(1-t^q)^{-1/p}dt=
x\,F\left(\frac{1}{p},\frac{1}{q};1+\frac{1}{q};x^q\right).$$
We also define
${\rm arccos}_{p,q}\,x={\rm arcsin}_{p,q}((1-x^p)^{1/q})$
(see \cite[Prop. 3.1]{egl}),
and
$${\rm arsinh}_{p,q}\,x=\int^x_0(1+t^q)^{-1/p}dt=
x\,F\left(\frac{1}{p},\frac{1}{q};1+\frac{1}{q};-x^q\right).$$

Their inverse functions are
$$\sin_{p,q}:(0,\pi_{p,q}/2)\to (0,1),\quad \cos_{p,q}:(0,\pi_{p,q}/2)\to (0,1), $$
$$\sinh_{p,q}:(0,m_{p,q})\to (0,1),\quad
m_{p,q}=\frac{1}{2^{1/p}}
F\left(1,\frac{1}{p};1+\frac{1}{q};\frac{1}{2}\right).$$
The significance of these expressions for this paper lies in the fact that we can now apply the vast available information about the
hypergeometric functions to the functions ${\rm arcsin}_{p,q}$
and ${\rm sin}_{p,q}\,.$

When $p=q$ these $(p,q)$-functions coincide with the $p$-functions
studied in the extensive earlier literature such as in \cite{bem,dm,le,bv},
and for $p=q=2$ they coincide with familiar elementary functions.

The main result of this paper
is the following theorem which refines our earlier results in \cite{bv}.

\begin{theorem}\label{d} For $p,q>1$ and $x\in(0,1)$, we have
\begin{enumerate}
\item $ x\left (1+\displaystyle\frac{x^q}{p(1+q)}\right)<
{\rm arcsin}_{p,q}\,x< \min\left\{\displaystyle\frac{\pi_{p,q}}{2}x,\displaystyle(1-x^q)^{-1/(p(1+q))}x\right\}$,
\item $ \displaystyle \left(\frac{x^p}{1+x^q}\right)^{1/p}\,L(p,q,x)
< {\rm arsinh}_{p,q}\,x
< \displaystyle\left(\frac{x^p}{1+x^q}\right)^{1/p}\,U(p,q,x),$\\
\end{enumerate}
where
$$L(p,q,x)=\max\left\{\left(1-\frac{qx^q}{p(1+q)(1+x^q)}\right)^{-1},
\left({x^q+1}\right)^{1/p} \left(\frac{p q+p+q x^q}{p (q+1) }\right)^{-1/q}\right\}\,,$$
and
$U(p,q,x)=\displaystyle\left(1-\frac{x^q}{1+x^q}\right)^{-q/(p (q+1))}\,.$

\end{theorem}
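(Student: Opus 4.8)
The plan is to reduce each of the six scalar inequalities to a comparison of two functions vanishing at $x=0$, and to certify each comparison either by truncating a power series with positive terms or by the monotone form of l'H\^opital's rule (if $u(0)=v(0)=0$ and $u'/v'$ is monotone, then so is $u/v$). Throughout I would write $f(x)={\rm arcsin}_{p,q}\,x$, $s(x)={\rm arsinh}_{p,q}\,x$, $z=x^q\in(0,1)$, $A=p(q+1)$ and $c=p(q+1)-q=p+q(p-1)$; note that $0<c<A$ and $q<A$ for all $p,q>1$.

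For part (1), the lower bound is immediate from the series ${\rm arcsin}_{p,q}\,x=x\sum_{n\ge0}\frac{(1/p,n)(1/q,n)}{(1+1/q,n)}\frac{x^{qn}}{n!}$: all coefficients are positive, so retaining the terms $n=0,1$ (summing to $x(1+\tfrac{x^q}{p(q+1)})$) and dropping the positive tail gives the strict inequality. The same positivity shows that $x\mapsto F(1/p,1/q;1+1/q;x^q)=f(x)/x$ is strictly increasing on $(0,1)$, hence less than $f(1)=\pi_{p,q}/2$, which is the first upper bound. For the second upper bound I set $h(x)=x(1-x^q)^{-1/A}$; a short computation gives $f'/h'=(1-z)^{c/A}\cdot\frac{A}{A-cz}$, whose logarithmic derivative has the sign of $\frac{1}{A-cz}-\frac{1}{A(1-z)}<0$ (because $c<A$). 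Thus $f'/h'$ decreases, so $f/h$ decreases to its limit $1$ at $0^+$, giving $f<h$.

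For part (2), I would divide the claimed double inequality by the common factor $(\tfrac{x^p}{1+x^q})^{1/p}=x(1+x^q)^{-1/p}$. The powers of $(1+x^q)^{1/p}$ cancel and the three assertions collapse to $s(x)<x(1+x^q)^{-1/A}$ (the bound $U$), $s(x)>x(1+\tfrac qA x^q)^{-1/q}$ (the bound $L_2$), and $s(x)>\frac{Ax(1+x^q)^{(p-1)/p}}{A+cx^q}$ (the bound $L_1$); here $s'(x)=(1+x^q)^{-1/p}$. The first two I would again treat by the monotone l'H\^opital rule. For $U$, with $g(x)=x(1+x^q)^{-1/A}$ one gets $s'/g'=(1+z)^{c/A}\frac{A}{A+cz}$, decreasing because its log-derivative has the sign of $\frac{1}{A(1+z)}-\frac{1}{A+cz}<0$ (as $c<A$); hence $s/g$ decreases to $1$ and $s<g$. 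For $L_2$ the pleasant point is that $g_2(x)=x(1+\tfrac qA x^q)^{-1/q}$ satisfies $g_2'(x)=(1+\tfrac qA x^q)^{-(q+1)/q}$, so $s'/g_2'=(1+z)^{-1/p}(1+\tfrac qA z)^{(q+1)/q}$ is increasing because (using $\frac{q+1}{A}=\frac1p$) its log-derivative has the sign of $\frac{1}{1+\frac qA z}-\frac{1}{1+z}>0$ (as $q<A$); hence $s/g_2$ increases to $1$ and $s>g_2$.

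The bound $L_1$ is where I expect the real work, since there $s$ and $g_1(x)=\frac{Ax(1+x^q)^{(p-1)/p}}{A+cx^q}$ agree to second order at $0$ and the ratio method becomes unwieldy; instead I would compare derivatives directly. Differentiating $g_1$ and clearing denominators, the constant and linear terms in $z$ cancel, and, using the algebraic identity $pc-A(p-q)=pq^2$, one is left with
$$s'(x)-g_1'(x)=(1+x^q)^{-1/p}\,\frac{c\,q^2\,z^2}{(A+cz)^2}>0\qquad(0<x<1).$$
Since $s(0)=g_1(0)=0$, integration then yields $s>g_1$, completing part (2). The principal obstacles are thus the verification of this identity (the only place where heavy algebra enters) and the recognition that for $U$, $L_2$ and the second upper bound of (1) a naive sign comparison of derivatives fails at $x=0$, where the leading terms coincide, so that the monotone ratio, together with the elementary inequalities $c<A$ and $q<A$, is exactly what is needed.
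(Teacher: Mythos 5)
Your argument is correct, and I checked the two computations on which it hinges: the identity $pc-A(p-q)=pq^2$ with $A=p(q+1)$, $c=A-q$, and the resulting formula $s'(x)-g_1'(x)=(1+x^q)^{-1/p}\,cq^2x^{2q}/(A+cx^q)^2>0$. However, your route is genuinely different from the paper's. The paper obtains the lower bound in (1) by combining two quoted hypergeometric inequalities from Lemma \ref{avb} (namely $F(a,b;c;x)+F(-a,b;c;x)>2$ and $F(-a,b;c;x)<1-\frac{ab}{c}x$), gets both upper bounds in (1) from Carlson's inequality (Lemma \ref{cinq}) together with the Gauss evaluation of $F(a,b;c;1)$, and for (2) first applies the Pfaff transformation \eqref{as} to write ${\rm arsinh}_{p,q}\,x=\bigl(\frac{x^p}{1+x^q}\bigr)^{1/p}F\bigl(1,\frac1p;1+\frac1q;\frac{x^q}{1+x^q}\bigr)$ and then reads off the upper bound and \emph{both} branches of the max in $L$ in one stroke from the two-sided Carlson inequality. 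You dispense with Carlson and Pfaff entirely: your series truncation is a cleaner equivalent of the paper's lower-bound argument in (1); your monotone-l'H\^opital computations are, in effect, self-contained reproofs of the needed special cases of Carlson's upper and first lower bounds; and your direct derivative comparison for $L_1$ replaces the hardest branch of Carlson's lower bound, correctly noting that the ratio method degenerates there because $s$ and $g_1$ agree through order $x^{q+1}$. The paper's approach buys brevity (every bound is a one-line specialization of a cited lemma); yours buys self-containedness and an explicit nonnegative error term. One wording slip to fix: for $L_2$ you write that $s/g_2$ ``increases to $1$''; since $s'/g_2'$ is increasing and the limit of $s/g_2$ at $0^+$ equals $1$, the ratio increases \emph{from} $1$, which is what yields $s>g_2$ (and similarly ``decreases to $1$'' should read ``decreases from its limit $1$ at $0^+$'' in the two upper-bound cases).
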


\begin{theorem}\label{thm2} For $p,q>1$, we have
\bigskip

\begin{enumerate}
\item $\displaystyle\left(\frac{p}{p-1}\right)^{1/q}\,
\alpha\left(\frac{1}{100},q\right)<
\pi_{p,q}<\left(\frac{pq+p-q}{q(p-1)}\right)^{1-1/q}
\left(\frac{p}{p-1}\right)^{1/q}\,\alpha\left(\frac{1}{30},q\right)$,
 \bigskip

$$\alpha(c,q)=\frac{2\sqrt{\pi }}{ (e\,q)^{1/q}}
   \sqrt[6]{\frac{q (q+4)+8}{q^3}+c},$$
\bigskip

\item $2^{1-2/p}\sqrt{\frac{\pi}{p}(4+p)}<\pi_{p',p}<
2^{1-2/p}\sqrt{\frac{\pi}{p}(4+p)+
\left(2\sqrt{\pi}\frac{\Gamma(3/4)}{\Gamma(1/4)}\right)^2}\,,$
\bigskip

\item $2^{2/p}\sqrt{\pi } \sqrt{\displaystyle\frac{5}{4}-\frac{1}{p}}
    < \pi_{p,p'} <
   2^{2/p}\sqrt{\pi }\displaystyle\frac{(2-1/p)^{3/2-1/p}}{\sqrt{e}(3/2-1/p)^{1-1/p}}$,
\end{enumerate}
where $p'=p/(p-1)$.
\end{theorem}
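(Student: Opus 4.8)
The plan is to reduce every estimate to a bound for a ratio or a single value of the gamma function, starting from the Beta representation $\pi_{p,q}=\frac{2}{q}\,\frac{\Gamma(1/p')\,\Gamma(1/q)}{\Gamma(1/p'+1/q)}$ (using $1-1/p=1/p'$) together with $\frac{1}{q}\Gamma(1/q)=\Gamma(1+1/q)$, so that $\pi_{p,q}=2\,\Gamma(1+1/q)\,\frac{\Gamma(1/p')}{\Gamma(1/p'+1/q)}$. For (2) and (3) I would specialize this: since $1-1/p'=1/p$ and $1-1/p=1/p'$ one gets $\pi_{p',p}=\frac{2}{p}B(1/p,1/p)$ and $\pi_{p,p'}=\frac{2}{p'}B(1/p',1/p')$, and Legendre's duplication formula $\Gamma(2z)=\frac{2^{2z-1}}{\sqrt{\pi}}\Gamma(z)\Gamma(z+1/2)$ turns both into $2^{2-2/s}\sqrt{\pi}\,g(1/s)$, where $s=p$ for (2), $s=p'$ for (3), and $g(x)=\Gamma(x+1)/\Gamma(x+1/2)$. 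Thus (3) is formally (2) read at $p'$ in place of $p$, and in both cases everything comes down to two-sided bounds for $g$ on $(0,1)$.

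For part (1) I would treat the two gamma factors separately. The ratio $\Gamma(1/p')/\Gamma(1/p'+1/q)$ is handled by Wendel's inequality $(\frac{x}{x+s})^{1-s}\le \Gamma(x+s)/(x^{s}\Gamma(x))\le 1$ with $x=1/p'$ and $s=1/q\in(0,1)$: the right half gives $\Gamma(1/p')/\Gamma(1/p'+1/q)\ge (p/(p-1))^{1/q}$, and the left half gives the matching upper factor $(p')^{1/q}(1+p'/q)^{1-1/q}$ with $1+p'/q=\frac{pq+p-q}{q(p-1)}$. The remaining factor $2\,\Gamma(1+1/q)$ is estimated by the Ramanujan--Karatsuba inequality $\sqrt{\pi}(x/e)^{x}(8x^{3}+4x^{2}+x+\tfrac1{100})^{1/6}<\Gamma(1+x)<\sqrt{\pi}(x/e)^{x}(8x^{3}+4x^{2}+x+\tfrac1{30})^{1/6}$ at $x=1/q$; since $(1/(eq))^{1/q}=(eq)^{-1/q}$ and $8/q^{3}+4/q^{2}+1/q=(q(q+4)+8)/q^{3}$, this is exactly $\alpha(1/100,q)<2\Gamma(1+1/q)<\alpha(1/30,q)$. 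Multiplying the two two-sided estimates yields (1).

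For the lower bounds in (2) and (3) I would invoke Kershaw's inequality with $s=1/2$, namely $g(x)>(x+1/4)^{1/2}$; squaring and unwinding the prefactor $2^{2-2/s}\sqrt{\pi}$ reproduces $2^{1-2/p}\sqrt{\frac{\pi}{p}(4+p)}$ and $2^{2/p}\sqrt{\pi}\sqrt{\frac54-\frac1p}$ exactly. For the upper bound in (3) I would use the monotonicity of Binet's function, i.e. that $z\mapsto \Gamma(z)/(z^{z-1/2}e^{-z})$ is decreasing; applied at $z=x+1$ and $z=x+1/2$ this gives $g(x)<\frac{(x+1)^{x+1/2}}{\sqrt{e}\,(x+1/2)^{x}}$, which at $x=1-1/p$ is precisely the stated $\frac{(2-1/p)^{3/2-1/p}}{\sqrt{e}\,(3/2-1/p)^{1-1/p}}$.

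The main obstacle is the upper bound in (2), which after the reduction is the assertion $g(x)^{2}<x+\frac14+c_{0}$ on $(0,1)$ with the precise constant $c_{0}=(\Gamma(3/4)/\Gamma(1/4))^{2}$. The natural route is to study the auxiliary function $\phi(x)=g(x)^{2}-x$: differentiating and using standard monotonicity/convexity properties of the digamma and trigamma functions should show $\phi$ is monotone, so that it is controlled by its boundary values (the sharp upper constant being $\phi(0^{+})=1/\pi$), and the product identity $g(x)g(x-1/2)=x$ ties the quantity $c_{0}=g(-1/4)^{2}$ to this boundary analysis. The delicate point is to certify that the admissible constant may be taken to be the stated $\frac14+c_{0}$, presumably via a refined Kershaw-type estimate for $g$. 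Finally I would verify that Wendel, Kershaw, Karatsuba and the Binet monotonicity are all applied within their ranges of validity ($0<1/q<1$, $0<x<1$), which holds throughout since $p,q>1$.
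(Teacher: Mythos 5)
Your reduction and choice of tools coincide with the paper's proof for part (1) (the Beta representation, Wendel's inequality applied to $\Gamma(1/p')/\Gamma(1/p'+1/q)$ with $x=1-1/p$, $s=1/q$, and Alzer's refinement of the Ramanujan--Karatsuba bound for $2\Gamma(1+1/q)$), for the lower bounds in (2) and (3) (duplication formula plus the left half of Kershaw's inequality with $s=1/2$), and for the upper bound in (3): your ``monotonicity of Binet's function'' is exactly the Ke\v{c}ki\'c--Vasi\'c inequality $\Gamma(b)/\Gamma(a)<b^{b-1/2}a^{1/2-a}e^{a-b}$ that the paper invokes with $b=x+1$, $a=x+1/2$. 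Up to that point the proposal is correct and essentially identical to the paper's argument.

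The one genuine gap is the upper bound in (2), and you have located it precisely. After the reduction the claim is $g(x)^2<x+\tfrac14+(\Gamma(3/4)/\Gamma(1/4))^2\approx x+0.3642$ for $g(x)=\Gamma(x+1)/\Gamma(x+1/2)$ and $x=1/p\in(0,1)$, whereas Kershaw's upper bound with $s=1/2$ only gives $g(x)^2<x+(\sqrt3-1)/2\approx x+0.3660$, which is strictly weaker than what is asserted. Your proposed repair --- show that $\phi(x)=g(x)^2-x$ is decreasing, so that $\phi(x)<\phi(0^+)=1/\pi\approx 0.3183$ on $(0,1)$, which is below $\tfrac14+(\Gamma(3/4)/\Gamma(1/4))^2=\phi(-1/4)$ --- would indeed close the gap, but as written it is only a plan: the required monotonicity, equivalently $2g(x)^2\left(\psi(x+1)-\psi(x+1/2)\right)<1$, is asserted (``should show''), not proved. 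For what it is worth, the paper offers no more at this point either: it states that (2) ``follows from'' Kershaw's inequality with $x=1/p$, $s=1/2$, which by the numerics above does not deliver the stated constant. So your instinct that this step needs a separate argument is correct, but your proposal does not yet supply one, and until it does the upper bound in (2) remains unproved.
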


The area enclosed by the so-called $p$-circle
$$|x|^p+|y|^p=1$$
is $\pi_{p,p'}$, see \cite{lpe}. In particular,  $\pi_{2,2}= \pi= 3.14\dots \,.$

\section{Some relations for $(p,q)$-functions}

In this section we shall prove some inequalities for
the functions defined in Section 1.

\begin{lemma}\label{pf} Fix $p,q>1$ and $x\in(0,1)$.

(1) The functions
$$({\rm arcsin}_{p,q}(x^k))^{1/k},\quad ({\rm arsinh}_{p,q}(x^k))^{1/k}$$
are decreasing and increasing, respectively in $k\in(0,\infty)$.

(2) The function
$$k\,{\rm arcsin}_{p,q}(x/k)$$
is decreasing on $k\in(1,\infty)$.

(3) In particular, for $k\geq 1$
$$\sqrt[k]{{\rm arcsin}_{p,q}(x^k)}\leq {\rm arcsin}_{p,q}(x)
\leq ({\rm arcsin}_{p,q}\sqrt[k]{x})^k\,,$$
$$({\rm arsinh}_{p,q}\sqrt[k]{x})^k\leq {\rm arsinh}_{p,q}(x)
\leq \sqrt[k]{{\rm arsinh}_{p,q}(x^k)}\,,$$
$${\rm arcsin}_{p,q}(x/k)\leq ({\rm arcsin}_{p,q}(x))/k\,.$$

\end{lemma}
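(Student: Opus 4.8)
The plan is to reduce everything to elementary monotonicity and convexity of the two integrands. Write $f={\rm arcsin}_{p,q}$ and $s={\rm arsinh}_{p,q}$, so that $f(y)=\int_0^y h(t)\,dt$ and $s(y)=\int_0^y \tilde h(t)\,dt$ with $h(t)=(1-t^q)^{-1/p}$ and $\tilde h(t)=(1+t^q)^{-1/p}$. On $(0,1)$ the function $h$ is increasing with $h>1$, while $\tilde h$ is decreasing with $\tilde h<1$. The single fact I would isolate first is that $f(y)/y=\frac1y\int_0^y h$ is the running average of $h$; since $h$ is increasing and $>1$, this average is itself increasing and $>1$. Symmetrically $s(y)/y=\frac1y\int_0^y\tilde h$ is decreasing and $<1$. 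Each is immediate from $\frac{d}{dy}\big(\tfrac1y\int_0^y h\big)=\frac{h(y)-f(y)/y}{y}$ together with the sign of $h(y)$ minus the average. This is the only genuine content; the rest is bookkeeping.

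For part (1), set $G(y)=\ln\!\big(f(y)/y\big)$, so by the above $G>0$ and $G'>0$ on $(0,1)$. Then $\ln\big((f(x^k))^{1/k}\big)=\ln x+\tfrac1k\,G(x^k)$, and differentiating in $k$ gives $\frac{d}{dk}\ln\big((f(x^k))^{1/k}\big)=\frac1{k^2}\big(k\,x^k(\ln x)\,G'(x^k)-G(x^k)\big)$. For $x\in(0,1)$ one has $x^k\in(0,1)$ and $\ln x<0$, so $G'>0$ makes the first summand negative and $G>0$ makes the second negative; hence $(f(x^k))^{1/k}$ is strictly decreasing. The arsinh case is identical after replacing $G$ by $\tilde G(y)=\ln(s(y)/y)$, which now satisfies $\tilde G<0$ and $\tilde G'<0$, so both summands flip sign and $(s(x^k))^{1/k}$ is increasing.

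Part (2) is pure convexity. With $\rho(k)=k\,f(x/k)$ and $v=x/k$ one computes $\rho'(k)=f(v)-v\,f'(v)$, so it suffices to show $v\,f'(v)\ge f(v)$ on $(0,1)$. Setting $\eta(v)=v\,f'(v)-f(v)$ we have $\eta(0)=0$ and $\eta'(v)=v\,f''(v)$; since $f''(t)=\tfrac qp\,t^{q-1}(1-t^q)^{-1/p-1}>0$, $\eta$ is positive, giving $\rho'\le0$. Finally part (3) is the specialization of (1) and (2) to comparisons with $k=1$: from the monotonicity in part (1), evaluating at $k\ge1$ versus $1$ yields $(f(x^k))^{1/k}\le f(x)$ and at $1/k\le1$ versus $1$ yields $f(x)\le(f(x^{1/k}))^{k}$, with the arsinh chain obtained the same way from the increasing case; the last display is $\rho(k)\le\rho(1^+)=f(x)$ for $k>1$, i.e. $f(x/k)\le f(x)/k$.

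The argument has no serious obstacle once the first paragraph is in place, but the one point that must not be skipped is the factor $1/y$. Differentiating $\ln f(x^k)$ directly, without dividing by $x^k$, produces a bracket $\frac{x^kf'(x^k)}{f(x^k)}\ln(x^k)-\ln f(x^k)$ whose two terms need not share a sign, because $f(x^k)$ itself crosses $1$ on $(0,1)$. Passing to $f(y)/y$, whose logarithm $G$ is one-signed and monotone, is exactly what aligns the two terms, and this alignment is the crux of the whole lemma.
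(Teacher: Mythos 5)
Your proof is correct. For part (1) it is essentially the paper's own argument in different clothing: the paper differentiates $E^{1/k}$ with $E={\rm arcsin}_{p,q}(x^k)$ and rearranges the bracket into $-\log(E/x^k)-\bigl(x^kE'/E-1\bigr)\log(1/x^k)$, which is term-for-term your decomposition via $G(y)=\log\bigl(f(y)/y\bigr)$; the two sign conditions (running average of the integrand exceeds $1$, and the integrand dominates its running average) are exactly the paper's hypotheses $g\ge 1$ and $g$ increasing. Your closing observation about why one must divide by $x^k$ before taking logarithms is precisely the content of the paper's rearrangement. The only place you genuinely diverge is part (2): the paper writes $k\,{\rm arcsin}_{p,q}(x/k)=x\,F\bigl(\tfrac1p,\tfrac1q;1+\tfrac1q;(x/k)^q\bigr)$ and differentiates the hypergeometric series (getting a manifestly nonpositive derivative), whereas you work directly with the integral representation and reduce to $v f'(v)\ge f(v)$ via convexity of the integrand. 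Your route is more elementary and self-contained; the paper's is a one-line consequence of the hypergeometric machinery it has already set up. Part (3) is handled identically in both.
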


\begin{proof} Let
\[
G(x) = \int_0^x g(t)\, dt,\quad E=G(x^k)\,,\qquad
f(k) = \big( E \big)^{1/k}\,.
\]
We get
\[
f' = -E^{1/k} \log E \frac1{k^2} + \frac1k E^{1/k -1} E' x^k \log x
= \frac{E^{1/k}}{k^2} \left( -\log \frac{E}{x^k}  - \left(x^k  \frac{E'}{E} - 1 \right) \log \frac1{x^{k}}\right).
\]
If $g\ge 1$, then
\[
\frac{E}{x^k} = \frac{1}{x^k}\int_0^{x^k} g(t)\, dt \ge 1.
\]
If $g$ is increasing, then
\[
E' - \frac{E}{x^k} = g(x^k) - \frac{1}{x^k}\int_0^{x^k} g(t)\, dt \ge 0,
\]
so that $x^k  \frac{E'}{E} - 1 \ge 0$. Thus $f'\le 0$ under these assumptions.

For the case of arcsin$_{p,q}$, let $g(t)=(1-t^q)^{-1/p}$, so the
conditions are clearly satisfied. Next, for arsinh$_{p,q}$, we set $g(t)=(1+t^q)^{-1/p}$ and note that
 $g(t)\le 1$ for all $t>0$ and that $g$ is decreasing and thus conclude that $f'\ge 0$, and the claims in (1) follow.
 For (2), let
$$h(k)=k\,{\rm arcsin}_{p,q}\left(\frac{x}{k}\right)=
x\,F\left(\frac{1}{p},\frac{1}{q};1+\frac{1}{q};\left(\frac{x}{k}\right)^q\right)\,.$$
We get
$$h'(k)=-\frac{q\,x}{k\,p(1+q)}\left(\frac{x}{k}\right)^q\,F\left(\frac{1}{p},\frac{1}{q};1+\frac{1}{q};\left(\frac{x}{k}\right)^q\right)\leq 0,$$
and this completes the proof.

The proof of (3) follows from parts (1) and (2).
\end{proof}

\bigskip

\begin{theorem}\label{pf1} For $p,q>1$ and $r,s\in(0,1)$, the following inequalities hold:
\\
\begin{enumerate}
\item ${\rm arcsin}_{p,q}(r\,s)\leq \sqrt{{\rm arcsin}_{p,q}(r^2)\,{\rm arcsin}_{p,q}(s^2)}\leq
{\rm arcsin}_{p,q}(r)\,{\rm arcsin}_{p,q}(s)\,,$\\

\item ${\rm arsinh}_{p,q}(r)\,{\rm arsinh}_{p,q}(s)\leq\sqrt{{\rm arsinh}_{p,q}(r^2)\,{\rm arsinh}_{p,q}(s^2)}\leq
{\rm arsinh}_{p,q}(r\,s)\,.$\\

\end{enumerate}
\end{theorem}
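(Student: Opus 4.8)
The plan is to split each of the two three–term chains into two single inequalities and to treat them by different means: the inequality between the geometric mean $\sqrt{\,\cdot\,}$ and the product is elementary and comes straight from Lemma \ref{pf}, whereas the inequality involving the argument $rs$ is an instance of geometric (multiplicative) convexity, resp. concavity, and is the real content.

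First I would dispose of the right–hand inequality in (1) and the left–hand inequality in (2). Applying Lemma \ref{pf}(3) with $k=2$ gives the one–variable estimates ${\rm arcsin}_{p,q}(x^2)\le({\rm arcsin}_{p,q}x)^2$ and $({\rm arsinh}_{p,q}x)^2\le{\rm arsinh}_{p,q}(x^2)$. Writing each once with $x=r$ and once with $x=s$, multiplying, and taking square roots yields $\sqrt{{\rm arcsin}_{p,q}(r^2){\rm arcsin}_{p,q}(s^2)}\le{\rm arcsin}_{p,q}(r)\,{\rm arcsin}_{p,q}(s)$ and, similarly, ${\rm arsinh}_{p,q}(r)\,{\rm arsinh}_{p,q}(s)\le\sqrt{{\rm arsinh}_{p,q}(r^2){\rm arsinh}_{p,q}(s^2)}$.

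For the two remaining inequalities I would put $a=r^2$, $b=s^2$, so that $rs=\sqrt{ab}$; then ${\rm arcsin}_{p,q}(rs)\le\sqrt{{\rm arcsin}_{p,q}(r^2){\rm arcsin}_{p,q}(s^2)}$ is precisely the midpoint geometric–convexity statement $f(\sqrt{ab})\le\sqrt{f(a)f(b)}$ for $f={\rm arcsin}_{p,q}$, while $\sqrt{{\rm arsinh}_{p,q}(r^2){\rm arsinh}_{p,q}(s^2)}\le{\rm arsinh}_{p,q}(rs)$ is the matching geometric–concavity statement for $f={\rm arsinh}_{p,q}$. Setting $r=e^{u}$, $s=e^{v}$ with $u,v<0$ shows these are equivalent to the (midpoint) convexity, resp. concavity, of $t\mapsto\log f(e^{t})$ on $(-\infty,0)$; since these functions are smooth and positive on $(0,1)$, this in turn is equivalent to $x\mapsto xf'(x)/f(x)$ being increasing, resp. decreasing, on $(0,1)$.

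The crux is therefore this monotonicity, and the key device is to rescale the integral. Writing $f(x)=\int_0^x g(t)\,dt$ with $g(t)=(1-t^q)^{-1/p}$ in the arcsin case and $g(t)=(1+t^q)^{-1/p}$ in the arsinh case, the substitution $t=xu$ gives
\[
\frac{xf'(x)}{f(x)}=\frac{x\,g(x)}{\int_0^x g(t)\,dt}=\left(\int_0^1\frac{g(xu)}{g(x)}\,du\right)^{-1}.
\]
Hence it suffices to decide, for each fixed $u\in(0,1)$, the monotonicity in $x$ of the ratio $g(xu)/g(x)$. For arcsin this ratio equals $\big((1-x^q)/(1-x^qu^q)\big)^{1/p}$ and for arsinh it equals $\big((1+x^q)/(1+x^qu^q)\big)^{1/p}$; differentiating the bracket with respect to $w=x^q$ produces a factor $u^q-1\le0$ in the first case and $1-u^q\ge0$ in the second, so $g(xu)/g(x)$ is decreasing, resp. increasing, in $x$. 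Integrating over $u$ and inverting then shows $xf'(x)/f(x)$ is increasing, resp. decreasing, which finishes the proof. The only genuine obstacle is establishing the geometric convexity/concavity; once the rescaling reduces it to the sign of $u^q-1$, the remaining computation is routine.
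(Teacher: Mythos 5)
Your proof is correct, and its overall architecture coincides with the paper's: both split each chain into an elementary half obtained from Lemma \ref{pf}(3) with $k=2$ and a substantive half which is exactly midpoint geometric convexity (resp.\ concavity), i.e.\ convexity (resp.\ concavity) of $h(t)=\log f(e^t)$. Where you genuinely diverge is in how that convexity is established. The paper computes $h''$, rewrites $h''\ge 0$ as $\frac{f}{y}\,(f'+yf'')\ge (f')^2$, and uses $f(y)/y\ge f'(0)$ (valid when $f''\ge 0$) to reduce to the sufficient condition $f'(0)(f'+yf'')\ge (f')^2$, i.e.\ $g+yg'\ge g^2$ for $g=f'$; this last pointwise inequality is only asserted (``one easily checks''), and for $g(t)=(1-t^q)^{-1/p}$ it amounts to $1+\tfrac{q y^q}{p(1-y^q)}\ge (1-y^q)^{-1/p}$, which still needs a Bernoulli-type argument. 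You instead work with $h'$ and use the rescaling $t=xu$ to get $xf'(x)/f(x)=\big(\int_0^1 g(xu)/g(x)\,du\big)^{-1}$, reducing the whole question to the sign of $\partial_w\frac{1\mp w}{1\mp wu^q}=\pm(u^q-1)/(1\mp wu^q)^2$, a one-line computation whose sign is manifest; full convexity of $h$ then follows from monotonicity of $h'$, and you only need the trivial direction of the midpoint-convexity equivalence. Your route buys a completely explicit verification of the crux at essentially no extra cost, while the paper's route is shorter on the page but leaves the decisive inequality to the reader; both are sound, and both finish identically by multiplying the two instances of Lemma \ref{pf}(3).
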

\begin{proof}
Let $h(x) = \log f(e^x)$ where $f(u) >0\,.$  Then $h$ is convex (in the $C^2$ case)
when $h''\ge 0$, i.e.\ iff
\[
\frac f y (f' + y f'') \ge (f')^2,
\]
where $y=e^x$ and the function is evaluated at $y$. If $f''\ge 0$, then
$$\frac f y \ge f'(0)\,,$$
so a sufficient condition for convexity is
$f'(0) (f' + y f'') \ge (f')^2$. If $f''\le 0$, the reverse holds, so a
sufficient condition for concavity is $f'(0) (f' + y f'') \le (f')^2$.
Suppose
\[
f(x) = \int_0^x g(t)\, dt.
\]
Then $f' = g$ and $f'' = g'$. Then one easily checks that
$h$ is convex in case $g(t)$ is $(1-t^p)^{-1/q}$,
and concave for $g(t)$ equal to $(1+t^p)^{-1/q}$.
Now the proof follows easily from Lemma \ref{pf}.
\end{proof}

\begin{lemma}\label{rv}\cite[Thm 1.7]{kmsv} Let $f:\mathbb{R}_+\to \mathbb{R}_+$ be a differentiable function
and for $c\neq 0$ define
$$g(x)=\frac{f(x^c)}{(f(x))^c}\,.$$
We have the following
\begin{enumerate}
\item if $h(x)=\log(f(e^x))$ is a convex function, then $g(x)$ is monotone increasing for $c,x\in(0,1)$
and monotone decreasing for $c>1,\,x\in(0,1)$ or $c<0,\,x\in(0,1)$,\\
\item if $h(x)$ is a concave function, then $g(x)$ is monotone increasing for $c>1,\,x\in(0,1)$ or $c<0,\,x\in(0,1)$
and monotone decreasing for $c,x\in(0,1)$.
\end{enumerate}
\end{lemma}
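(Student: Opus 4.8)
The plan is to pass to logarithms and thereby reduce the monotonicity of $g$ to a single sign computation. Since $f>0$ is differentiable, I may write $f(u)=e^{h(\log u)}$, so that
\[
\log g(x)=\log f(x^c)-c\,\log f(x)=h(c\log x)-c\,h(\log x).
\]
Because $\exp$ is strictly increasing, $g$ has the same sense of monotonicity as $\log g$; and since $t=\log x$ is an increasing change of variable carrying $x\in(0,1)$ onto $t\in(-\infty,0)$, it suffices to determine, on $(-\infty,0)$, the monotonicity of
\[
G(t)=h(ct)-c\,h(t).
\]

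Next I would differentiate, obtaining $G'(t)=c\bigl(h'(ct)-h'(t)\bigr)$, and read off its sign from two ingredients. First, the monotonicity of $h'$: a differentiable convex $h$ has $h'$ increasing and a concave $h$ has $h'$ decreasing, so the sign of $h'(ct)-h'(t)$ agrees with that of $ct-t$ in the convex case and is opposite to it in the concave case. Second, the comparison of $ct$ with $t$ for $t<0$: from $ct-t=t(c-1)$ one gets $ct>t$ when $0<c<1$ or $c<0$, and $ct<t$ when $c>1$. Combining the sign of $c$, the sign of $ct-t$, and the convex/concave alternative then fixes the sign of $G'$ on $(-\infty,0)$, hence the monotonicity of $g$.

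Finally I would simply tabulate the three ranges $0<c<1$, $c>1$, $c<0$ against the convex and concave cases. For example, in the convex case with $0<c<1$ one has $ct>t$ and $h'$ increasing, so $h'(ct)-h'(t)>0$ and $c>0$ give $G'>0$, i.e.\ $g$ is increasing, as claimed in (1); the other five combinations are identical in spirit and reproduce precisely the assertions of (1) and (2).

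The only real obstacle is keeping the signs straight, since there are three independent sources of reversal: the factor $c$, the factor $t<0$ concealed in $ct-t=t(c-1)$, and the convex/concave dichotomy in $h'$. A safe way to avoid slips is to record the compact formula $\operatorname{sign}G'(t)=-\sigma\,\operatorname{sign}(c)\,\operatorname{sign}(c-1)$, valid for $t<0$, with $\sigma=+1$ in the convex case and $\sigma=-1$ in the concave case; the six required sign patterns then follow by inspection.
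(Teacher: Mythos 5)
Your proof is correct. Note that the paper itself offers no proof of this lemma: it is imported verbatim from \cite[Thm 1.7]{kmsv}, so there is nothing in the text to compare against. Your reduction to $G(t)=h(ct)-c\,h(t)$ with $t=\log x\in(-\infty,0)$, the identity $G'(t)=c\bigl(h'(ct)-h'(t)\bigr)$, and the sign bookkeeping via $ct-t=t(c-1)$ all check out (in particular the compact formula $\operatorname{sign}G'=-\sigma\operatorname{sign}(c)\operatorname{sign}(c-1)$ reproduces all six cases correctly); this is the natural and standard argument for the cited result.
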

We get the following lemma by the proof of Theorem \ref{pf1} and applying Lemma \ref{rv}.

\begin{lemma} Let $I=(0,1)$. For $p,q>1$ the function
$$g_1(x)=\frac{{\rm arcsin}_{p,q}(x^k)}{({\rm arcsin}_{p,q}(x))^k}$$
is increasing (decreasing) in $x\in I$ for $k\in I \quad(k\in \mathbb{R}\setminus [0,1])$, and
$$g_2(x)=\frac{{\rm arsinh}_{p,q}(x^k)}{({\rm arsinh}_{p,q}(x))^k}$$
is increasing (decreasing) in $x\in I$ for $k\in \mathbb{R}\setminus I \quad(k\in [0,1]).$ In particular, for $k\in I$,
$$\left(\frac{\pi_{p,q}}{2}\right)^{1-1/k}  \sqrt[k]{{\rm arcsin}_{p,q}(x^k)} \leq {\rm arcsin}_{p,q}(x)\,$$
$$\left(m_{p,q}\right)^{1-1/k}  \sqrt[k]{{\rm arsinh}_{p,q}(x^k)} \geq {\rm arsinh}_{p,q}(x).$$
The both inequalities reverse for $k\in \mathbb{R}\setminus [0,1]$.
\end{lemma}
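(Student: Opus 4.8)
The plan is to read this lemma off directly from the two tools already assembled: the structural fact extracted in the proof of Theorem~\ref{pf1}, namely that for $f={\rm arcsin}_{p,q}$ the function $h(x)=\log f(e^x)$ is convex while for $f={\rm arsinh}_{p,q}$ it is concave, together with the monotonicity dichotomy of Lemma~\ref{rv}. Since $g_1$ and $g_2$ are exactly the functions $g(x)=f(x^c)/(f(x))^c$ of Lemma~\ref{rv} with $c=k$ and $f={\rm arcsin}_{p,q}$, $f={\rm arsinh}_{p,q}$ respectively, the whole task reduces to matching cases.

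First I would treat $g_1$. Here $h$ is convex, so Lemma~\ref{rv}(1) applies: $g$ is increasing when $c,x\in(0,1)$ and decreasing when $c>1$ or $c<0$ (with $x\in(0,1)$). Reading $c=k$ this says precisely that $g_1$ is increasing for $k\in I$ and decreasing for $k\in\mathbb{R}\setminus[0,1]$, as claimed. For $g_2$ the function $h$ is concave, so Lemma~\ref{rv}(2) gives the opposite pairing: $g$ is increasing for $c>1$ or $c<0$ and decreasing for $c\in(0,1)$, i.e.\ $g_2$ is increasing for $k\in\mathbb{R}\setminus I$ and decreasing for $k\in[0,1]$. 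This establishes both monotonicity assertions without further computation.

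To obtain the two displayed inequalities I would fix $k\in I$ and pass to the limit $x\to1^-$, using the boundary values ${\rm arcsin}_{p,q}(1)=\pi_{p,q}/2$ and ${\rm arsinh}_{p,q}(1)=m_{p,q}$ recorded through the domains in Section~1. Since $g_1$ is increasing on $I$, for every $x\in I$ we have
$$g_1(x)\le\lim_{t\to1^-}g_1(t)=\frac{\pi_{p,q}/2}{(\pi_{p,q}/2)^k}=\Big(\frac{\pi_{p,q}}{2}\Big)^{1-k},$$
hence ${\rm arcsin}_{p,q}(x^k)\le(\pi_{p,q}/2)^{1-k}({\rm arcsin}_{p,q}(x))^k$. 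Raising to the power $1/k>0$ (which preserves the inequality) and multiplying by $(\pi_{p,q}/2)^{1-1/k}$ gives the first stated bound. For $g_2$ the argument is identical in form, but $g_2$ is decreasing for $k\in[0,1]$, so $g_2(x)\ge(m_{p,q})^{1-k}$ and the inequality flips, producing the second bound with $\ge$.

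Finally, the reversed inequalities correspond to reversing the monotonicity: for $k>1$ the roles of increasing and decreasing interchange for both $g_1$ and $g_2$, so the same limit-and-rearrange step yields the opposite inequalities. The only point demanding care is the bookkeeping of the inequality direction through the $1/k$-th power step together with the correct endpoint values at $x=1$; since $k>0$ throughout the displayed inequalities, the power step is order-preserving and there is no genuine obstacle.
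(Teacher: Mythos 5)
Your proposal is correct and follows essentially the same route as the paper, which derives the lemma by combining the log-convexity (resp.\ log-concavity) of $t\mapsto{\rm arcsin}_{p,q}(e^t)$ (resp.\ ${\rm arsinh}_{p,q}(e^t)$) established in the proof of Theorem~\ref{pf1} with the monotonicity dichotomy of Lemma~\ref{rv}. Your explicit limit-as-$x\to1^-$ derivation of the two displayed inequalities is a welcome elaboration of a step the paper leaves implicit; the only caveat (inherited from the statement itself) is that for $k<0$ the quantity ${\rm arcsin}_{p,q}(x^k)$ is not defined, so the ``reversed'' inequalities really only concern $k>1$.
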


\begin{lemma}\label{neu}\cite[Thm 2.1]{ne2} Let $f:\mathbb{R}_+\to \mathbb{R}_+$
be a differentiable, log-convex function and let $a\geq 1$. Then $g(x)=(f(x))^a/f(a\,x)$
 decreases on its domain. In particular, if $0\leq x\leq y\,,$ then the following inequalities
 $$\frac{(f(y))^a}{f(a\,y)}\leq\frac{(f(x))^a}{f(a\,x)}\leq (f(0))^{a-1}$$
 hold true. If $0<a\leq 1$, then the function $g$ is an increasing function on $\mathbb{R}_+$
and inequalities are reversed.
\end{lemma}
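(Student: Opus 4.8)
The plan is to study the sign of the derivative of $\log g$ rather than of $g$ itself, since $g$ is a ratio of positive quantities and the logarithm converts this ratio into a difference of values of the logarithmic derivative of $f$, which is exactly the object controlled by log-convexity. First I would set $L(x)=\log f(x)$. By hypothesis $f$ is positive and differentiable, so $L$ is differentiable with $L'(x)=f'(x)/f(x)$, and the assumption that $f$ is log-convex means precisely that $L$ is convex; hence $L'$ is non-decreasing on $\mathbb{R}_+$.

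Next I would write $\log g(x)=aL(x)-L(ax)$ and differentiate, applying the chain rule to the second term, to obtain
$$(\log g)'(x)=a\big(L'(x)-L'(ax)\big).$$
For $a\geq 1$ and $x\geq 0$ we have $ax\geq x$, and since $L'$ is non-decreasing this forces $L'(ax)\geq L'(x)$, so the bracket is $\leq 0$; as $a>0$, the whole expression is $\leq 0$ and therefore $g$ is decreasing on its domain. For $0<a\leq 1$ the inequality $ax\leq x$ reverses the comparison of $L'$ values, giving $(\log g)'\geq 0$ and hence $g$ increasing.

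Then I would deduce the explicit two-sided estimate. Monotonicity of $g$ directly yields $g(y)\leq g(x)$ for $0\leq x\leq y$, which is the first displayed inequality. Evaluating at the left endpoint gives $g(0)=f(0)^a/f(0)=f(0)^{a-1}$, so the bound $g(x)\leq g(0)=f(0)^{a-1}$ supplies the upper estimate. For $0<a\leq 1$ the same reasoning with $g$ increasing reverses both inequalities, completing the statement.

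I do not expect a serious obstacle here; the argument is essentially a one-line derivative computation combined with monotonicity of $L'$. The only points that deserve a little care are the justification that a differentiable convex function has a non-decreasing derivative (a standard fact that requires no additional smoothness such as $C^2$), and the verification that $g$ is genuinely defined up to and including $x=0$, so that the endpoint evaluation $g(0)=f(0)^{a-1}$ is legitimate.
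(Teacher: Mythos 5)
Your argument is correct. Note that the paper does not prove this lemma at all: it is quoted verbatim from Neuman \cite[Thm 2.1]{ne2}, so there is no in-paper proof to compare against. Your derivation --- writing $\log g(x)=aL(x)-L(ax)$ with $L=\log f$, using that log-convexity plus differentiability makes $L'$ non-decreasing, and reading off the sign of $(\log g)'(x)=a\bigl(L'(x)-L'(ax)\bigr)$ from the comparison of $x$ and $ax$ --- is exactly the standard proof of this result, and the two points you flag at the end (monotonicity of the derivative of a differentiable convex function without assuming $C^2$, and the interpretation of $\mathbb{R}_+$ so that $g(0)=f(0)^{a-1}$ makes sense as a value or a limit) are precisely the only details needing care. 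Nothing is missing.
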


\begin{lemma} For $k,p,q>1$ and $r,s\in(0,1)$ with $r \geq s$, we have
\begin{eqnarray*}
\left(\frac{{\rm arcsin}_{p,q}(s)}{{\rm arcsin}_{p,q}(r)}\right)^{k}&\leq&
\frac{{\rm arcsin}_{p,q}(s^k)}{{\rm arcsin}_{p,q}(r^k)}\,,\\
\frac{{\rm arsinh}_{p,q}(s^k)}{{\rm arsinh}_{p,q}(r^k)}&\leq&
\left(\frac{{\rm arsinh}_{p,q}(s)}{{\rm arsinh}_{p,q}(r)}\right)^{k}
\,.
\end{eqnarray*}
\end{lemma}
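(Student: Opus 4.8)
The plan is to reduce each of the two inequalities to a monotonicity statement for the function $g(x)={\rm arcsin}_{p,q}(x^k)/({\rm arcsin}_{p,q}(x))^k$ (respectively with ${\rm arsinh}_{p,q}$) that appears in Lemma \ref{rv}, and then to invoke the convexity and concavity facts already established in the proof of Theorem \ref{pf1}.

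For the first inequality write $f={\rm arcsin}_{p,q}$. Since all the quantities involved are positive, the claim $(f(s)/f(r))^k\le f(s^k)/f(r^k)$ is equivalent, after cross-multiplying and collecting the arguments $r$ and $s$ on opposite sides, to
$$\frac{(f(s))^k}{f(s^k)}\le\frac{(f(r))^k}{f(r^k)},$$
that is, to the assertion that $G(x)=(f(x))^k/f(x^k)$ is increasing on $(0,1)$. Because $G=1/g$ with $g$ as in Lemma \ref{rv} (taking $c=k$), this is the same as saying that $g$ is decreasing on $(0,1)$. Now the proof of Theorem \ref{pf1} shows that $h(x)=\log(f(e^x))$ is convex for $f={\rm arcsin}_{p,q}$, so Lemma \ref{rv}(1) with $c=k>1$ and $x\in(0,1)$ gives that $g$ is decreasing; hence $G$ is increasing and, since $s\le r$, the first inequality follows.

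The second inequality is handled symmetrically with $\tilde f={\rm arsinh}_{p,q}$. Rearranging $\tilde f(s^k)/\tilde f(r^k)\le(\tilde f(s)/\tilde f(r))^k$ in the same fashion yields
$$\frac{\tilde f(s^k)}{(\tilde f(s))^k}\le\frac{\tilde f(r^k)}{(\tilde f(r))^k},$$
i.e.\ that $g(x)=\tilde f(x^k)/(\tilde f(x))^k$ is increasing on $(0,1)$. Here the proof of Theorem \ref{pf1} gives that $h(x)=\log(\tilde f(e^x))$ is concave, so Lemma \ref{rv}(2), again with $c=k>1$ and $x\in(0,1)$, shows that $g$ is increasing; together with $s\le r$ this yields the second inequality.

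I expect no analytic difficulty here, since both statements collapse to direct applications of Lemma \ref{rv} once written in the right form. The one place demanding care is the bookkeeping: the algebraic rearrangement that interchanges the roles of numerator and denominator, and the correct reading of the monotonicity directions in Lemma \ref{rv} for the present range $c=k>1$, $x\in(0,1)$, where the convex case produces a decreasing $g$ and the concave case an increasing $g$. Matching these signs and directions to the hypothesis $r\ge s$ is the main thing to verify.
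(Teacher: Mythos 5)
Your proof is correct. It differs from the paper's own argument only in which auxiliary monotonicity lemma it routes through: the paper sets $u(x)={\rm arcsin}_{p,q}(e^{-x})$ and $v(x)=1/{\rm arsinh}_{p,q}(e^{-x})$, observes that these are log-convex by the computations in the proof of Theorem \ref{pf1}, and then applies Neuman's Lemma \ref{neu} (the function $(f(x))^a/f(ax)$ decreases for log-convex $f$ and $a\ge 1$) after the substitution $e^{-x}=r$; you instead stay in the variable $x\in(0,1)$ and apply Lemma \ref{rv} to conclude that $f(x^k)/(f(x))^k$ is decreasing for ${\rm arcsin}_{p,q}$ and increasing for ${\rm arsinh}_{p,q}$ when $k>1$. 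The two lemmas are essentially the same statement in exponential versus multiplicative coordinates, and both rest on exactly the convexity/concavity of $\log f(e^x)$ established in the proof of Theorem \ref{pf1}, so the underlying mechanism is identical. What your route buys is that the monotonicity statement you need is in fact already recorded in the paper as the unnamed lemma immediately following Lemma \ref{rv} (the functions $g_1$ and $g_2$ there), so your argument could be shortened to a one-line citation of that lemma plus the algebraic rearrangement; what the paper's route buys is the extra upper bound $(f(0))^{a-1}$ packaged in Lemma \ref{neu}, though that is not used here. Your bookkeeping of the rearrangement and of the monotonicity directions for $c=k>1$, $x\in(0,1)$ is accurate.
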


\begin{proof} For $x>0$, the following functions
$$u(x)={\rm arcsin}_{p,q}(e^{-x})\,,\quad v(x)=1/{\rm arsinh}_{p,q}(e^{-x})$$  
are log-convex by the proof of Theorem \ref{pf1}. With the change of variables $e^{-x}=r$ the
inequalities follow from Lemma \ref{neu}.
\end{proof}

\begin{lemma}\label{ku}\cite[Thm 2, p.151]{ku}
Let $J\subset\mathbb{R}$ be an open interval, and let $f:J\to \mathbb{R}$
be strictly monotonic function. Let $f^{-1}:f(J)\to J$ be the inverse to $f$ then
\begin{enumerate}
\item if $f$ is convex and increasing, then $f^{-1}$ is concave,
\item if $f$ is convex and decreasing, then $f^{-1}$ is convex,
\item if $f$ is concave and increasing, then $f^{-1}$ is convex,
\item if $f$ is concave and decreasing, then $f^{-1}$ is concave.
\end{enumerate}
\end{lemma}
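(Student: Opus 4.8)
The plan is to prove all four statements directly from the definition of convexity, without invoking differentiability, since convexity (or concavity) on the open interval $J$ already forces continuity of $f$ and hence the existence and continuity of $f^{-1}$ on $f(J)$. The unifying observation is that the target inequality for $g=f^{-1}$ at a convex combination can always be reduced, by applying the monotone function $f$ to both sides, to the defining inequality for the convexity or concavity of $f$ itself. Thus a single mechanism handles all four cases, and only the signs change.

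To make this precise I would fix $y_1,y_2\in f(J)$ and $\lambda\in[0,1]$, write $y_i=f(x_i)$ so that $x_i=g(y_i)$, and set $\bar x=\lambda x_1+(1-\lambda)x_2$ and $\bar y=\lambda y_1+(1-\lambda)y_2$. The assertion that $g$ is concave (resp. convex) is precisely $g(\bar y)\ge\bar x$ (resp. $g(\bar y)\le\bar x$). Because $f$ is strictly monotone, $g$ inherits the same monotonicity, and applying $f$ to both sides of the target either preserves or reverses it; in every case it becomes a comparison of $\bar y$ with $f(\bar x)$. At that point the convexity or concavity of $f$ supplies exactly the inequality $f(\bar x)\le\bar y$ or $f(\bar x)\ge\bar y$ that is needed.

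Running this through the four sign patterns finishes the proof. For example, if $f$ is convex and increasing, then $g$ is increasing, the target $g(\bar y)\ge\bar x$ is equivalent to $\bar y\ge f(\bar x)$, and convexity gives $f(\bar x)\le\lambda y_1+(1-\lambda)y_2=\bar y$, so $g$ is concave. If instead $f$ is convex and decreasing, then $g$ is decreasing, applying $f$ reverses the target $g(\bar y)\le\bar x$ into the same $\bar y\ge f(\bar x)$, and convexity again closes the argument, now yielding convexity of $g$. The two concave cases are identical with every convexity inequality reversed.

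I expect the only genuine obstacle to be bookkeeping: one must track which of the two monotonicity-induced reversals applies and how it combines with the convexity or concavity inequality, since a single misplaced flip interchanges the conclusion between convex and concave. As a consistency check one can specialize to the $C^2$ case, where $(f^{-1})''=-f''/(f')^3$, and read off the four sign patterns directly; this confirms the conclusions but does not replace the definitional argument, which is what covers the merely continuous situation.
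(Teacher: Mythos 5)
Your proof is correct: the reduction of the concavity/concavity target for $f^{-1}$ to the defining inequality for $f$ by applying the strictly monotone $f$ to both sides works in all four sign cases, and your remark that convexity on an open interval forces continuity (so that $f(J)$ is an interval and $\bar y\in f(J)$) covers the only point needing care. The paper itself offers no proof to compare against --- it simply cites this standard fact from Kuczma's book --- so your self-contained definitional argument, valid without any differentiability assumption, is a perfectly adequate substitute for the reference.
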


\begin{lemma} For $k,p,q>1$ and $r\geq s$, we have
\begin{eqnarray*}
\left(\frac{\sin_{p,q}(r)}{\sin_{p,q}(s)}\right)^k&\leq& \frac{\sin_{p,q}(r^k)}{\sin_{p,q}(s^k)}
,\quad\, r,s\in(0,1),\\
\left(\frac{\sinh_{p,q}(r)}{\sinh_{p,q}(s)}\right)^k&\geq& \frac{\sinh_{p,q}(r^k)}{\sinh_{p,q}(s^k)}
,\quad\, r,s\in(0,1)\,,
\end{eqnarray*}
inequalities reverse for $k\in(0,1)$.
\end{lemma}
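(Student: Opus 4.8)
The plan is to deduce both inequalities from Lemma \ref{rv}, applied once with $f=\sin_{p,q}$ and once with $f=\sinh_{p,q}$. Writing $S=\sin_{p,q}$, the inequality $(S(r)/S(s))^k\le S(r^k)/S(s^k)$ for $r\ge s$ is equivalent to saying that $g(x)=S(x^k)/(S(x))^k$ is increasing on $(0,1)$, and the reversed inequality for $k\in(0,1)$ is equivalent to $g$ being decreasing there; the two $\sinh_{p,q}$ statements are the mirror images, with ``increasing'' and ``decreasing'' interchanged. By Lemma \ref{rv}, these monotonicity properties follow once I know the sign of the second derivative of $h(x)=\log(f(e^x))$: I need $h$ \emph{concave} when $f=\sin_{p,q}$ and $h$ \emph{convex} when $f=\sinh_{p,q}$.

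The key step is to transfer these convexity facts from the inverse functions, for which they are already available. In the proof of Theorem \ref{pf1} it was shown that $H_1(x)=\log({\rm arcsin}_{p,q}(e^x))$ is convex and $H_2(x)=\log({\rm arsinh}_{p,q}(e^x))$ is concave; moreover both $H_1$ and $H_2$ are increasing, being composites of increasing functions. The crucial observation is that conjugation by $x\mapsto e^x$ carries inversion to inversion: since $\sin_{p,q}$ is inverse to ${\rm arcsin}_{p,q}$, a short computation shows that $\log(\sin_{p,q}(e^x))$ is precisely the inverse function of $H_1$, and likewise $\log(\sinh_{p,q}(e^x))$ is the inverse of $H_2$.

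Now I would invoke Lemma \ref{ku}. As $H_1$ is convex and increasing, its inverse $\log(\sin_{p,q}(e^x))$ is concave by Lemma \ref{ku}(1); as $H_2$ is concave and increasing, its inverse $\log(\sinh_{p,q}(e^x))$ is convex by Lemma \ref{ku}(3). Feeding the concavity of $h=\log(\sin_{p,q}(e^x))$ into Lemma \ref{rv}(2) shows that $g(x)=\sin_{p,q}(x^k)/(\sin_{p,q}(x))^k$ increases for $k>1$ and decreases for $k\in(0,1)$; feeding the convexity of $h=\log(\sinh_{p,q}(e^x))$ into Lemma \ref{rv}(1) gives the opposite monotonicity for the $\sinh_{p,q}$ quotient. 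Rearranging each monotonicity statement as in the first paragraph yields the four asserted inequalities.

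I expect the only genuine obstacle to lie in the bookkeeping of the middle step: checking carefully that $\log(f^{-1}(e^x))$ really is the inverse function of $\log(f(e^x))$, so that Lemma \ref{ku} applies, and then matching each case to the correct monotonicity branch of Lemma \ref{rv} and to the correct direction of the rearranged quotient inequality. One should also observe that for $x\in(0,1)$ and $k>0$ the arguments $x$ and $x^k$ stay in $(0,1)$, so all compositions remain inside the domains on which $\sin_{p,q}$ and $\sinh_{p,q}$ are defined.
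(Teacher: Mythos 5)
Your argument is correct and follows the same skeleton as the paper's proof: both extract the convexity of $\log({\rm arcsin}_{p,q}(e^x))$ and the concavity of $\log({\rm arsinh}_{p,q}(e^x))$ from the proof of Theorem \ref{pf1}, transfer these properties to $\sin_{p,q}$ and $\sinh_{p,q}$ through Lemma \ref{ku}, and then conclude with a quoted monotonicity lemma. The only divergence is in the packaging of the last step: the paper works with the decreasing convex functions $\log({\rm arcsin}_{p,q}(e^{-x}))$ and $\log(1/{\rm arsinh}_{p,q}(e^{x}))$, applies Lemma \ref{ku}(2) to obtain convexity of $\log(1/\sin_{p,q}(e^{y}))$ and $\log(\sinh_{p,q}(e^{-y}))$, and finishes with Neuman's Lemma \ref{neu} on the monotonicity of $(f(y))^a/f(ay)$; you instead keep the increasing versions, invoke Lemma \ref{ku}(1) and (3), and finish with Lemma \ref{rv} on the monotonicity of $f(x^c)/(f(x))^c$ in $x$. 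Under the change of variable $x=e^{-y}$ the two concluding lemmas encode the same fact, so nothing essential is gained or lost; your route has the minor advantages of reusing Lemma \ref{rv} exactly as it was already used for the quotient ${\rm arcsin}_{p,q}(x^k)/({\rm arcsin}_{p,q}(x))^k$ earlier in the paper, and of handling the reversal for $k\in(0,1)$ directly from the case split built into Lemma \ref{rv} rather than from the $0<a\leq 1$ clause of Lemma \ref{neu}.
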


\begin{proof} It is clear from the proof of Theorem \ref{pf1} that the functions
$$f(x)=\log({\rm arcsin_{p,q}}(e^{-x}))
,\, h(x)=\log(1/{\rm arsinh_{p,q}}(e^{x}))$$
are convex and decreasing. Then Lemma \ref{ku}(2) implies that
$$f^{-1}(y)=\log(1/\sin_{p,q}(e^y))
,\,h^{-1}(y)=\log(\sinh_{p,q}(e^{-y})),\,$$
are convex, now the result follows from Lemma \ref{neu}.
\end{proof}

Let $f:I\to (0,\infty)$ be continuous, where $I$ is a subinterval of $(0,\infty)$. Let $M$
and $N$ be any two mean values. We say that  $f$ is $MN$-convex (concave) if
$$f (M(x, y)) \leq (\geq)
N(f (x), f (y)) \,\, \text{ for \,\, all} \,\, x,y \in I\,.$$
For some properties of these
 functions, see \cite{avv2}. If $A(x,y)=(x+y)/2$ is the arithmetic mean, then we see that convex functions are $AA$-convex.

\begin{lemma}\label{anvu}\cite[Thm 2.4(1)]{avv2} Let $I=(0,b),\,0<b<\infty$, and let $f:I\to (0,\infty)$ be continuous. Then
$f$ is $AA$-convex (concave) if and only if $f$ is convex (concave),
where $A$ is the arithmetic mean.
\end{lemma}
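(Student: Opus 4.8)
The plan is to recognize that $AA$-convexity of $f$ is precisely \emph{midpoint convexity}: since $A(x,y)=(x+y)/2$, the defining condition $f(A(x,y))\le A(f(x),f(y))$ reads
$$f\!\left(\frac{x+y}{2}\right)\le \frac{f(x)+f(y)}{2}\qquad(x,y\in I).$$
Thus the lemma is the classical statement that, for a continuous function, midpoint convexity is equivalent to convexity. (The hypothesis that the range lies in $(0,\infty)$ plays no role for this particular pair of means; it matters only for other means in the general $MN$-framework.) One implication is immediate: if $f$ is convex, then taking the convex-combination parameter equal to $1/2$ in the definition of convexity yields the midpoint inequality, i.e.\ $AA$-convexity.

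For the converse I would argue in two steps. First, assuming midpoint convexity, I would prove by induction on $n$ that
$$f(\lambda x+(1-\lambda)y)\le \lambda f(x)+(1-\lambda)f(y)$$
holds for every \emph{dyadic} ratio $\lambda=k/2^{n}$, $k\in\{0,1,\dots,2^{n}\}$, and all $x,y\in I$. The base case $n=1$ is the midpoint inequality. For the inductive step one writes an odd-numerator dyadic $\lambda=k/2^{n+1}$ as the average $\lambda=(\mu+\nu)/2$ of the two neighbouring dyadics $\mu=(k-1)/2^{n+1}$ and $\nu=(k+1)/2^{n+1}$, both of which have even numerators and hence sit at level $n$; applying midpoint convexity to the corresponding points and then the induction hypothesis to each of $\mu,\nu$ gives the claim at level $n+1$. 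All convex combinations of $x,y\in I$ stay in $I$ because $I$ is an interval, so the argument never leaves the domain.

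Second, I would pass from dyadic $\lambda$ to arbitrary $\lambda\in[0,1]$ by continuity: the dyadic rationals are dense in $[0,1]$, so choosing $\lambda_j\to\lambda$ and letting $j\to\infty$ in the inequality above, continuity of $f$ yields $f(\lambda x+(1-\lambda)y)\le \lambda f(x)+(1-\lambda)f(y)$, which is exactly convexity. The concave case follows by reversing every inequality throughout (equivalently, by applying the convex case to $-f$, whose range causes no difficulty since positivity is not used). The only genuine content lies in the converse implication; within it the combinatorial bookkeeping of the dyadic induction and the density-plus-continuity limit are the steps to handle carefully, whereas the forward implication and the domain check are routine.
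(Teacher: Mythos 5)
Your proof is correct: the observation that $AA$-convexity is exactly midpoint convexity, the dyadic induction, and the passage to general $\lambda$ by density and continuity together constitute the classical and complete argument, and your handling of the concave case via $-f$ is fine (the positivity of the range is indeed irrelevant here). Note that the paper itself offers no proof of this lemma --- it is quoted verbatim from \cite[Thm 2.4(1)]{avv2} --- so your write-up simply supplies the standard argument that the cited source relies on; there is nothing to reconcile between the two.
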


\begin{lemma} For $p,q>1$, and $r,s\in(0,1)$, we have
\begin{enumerate}
\item $\displaystyle{\rm arcsin}_{p,q}\,r+{\rm arcsin}_{p,q}\,s \le 2\,{\rm arcsin}_{p,q}\,\left(\frac{r+s}{2}\right)\,,$\\
\item $\displaystyle{\rm sin}_{p,q}\,r+{\rm sin}_{p,q}\,s \ge 2\,{\rm sin}_{p,q}\,
\left(\frac{r+s}{2}\right)  \,,$\\
\item $\displaystyle{\rm arsinh}_{p,q}\,r+{\rm arsinh}_{p,q}\,s
\ge 2\,{\rm arsinh}_{p,q}\,\left(\frac{r+s}{2}\right) \,,$\\
\item $\displaystyle{\rm sinh}_{p,q}\,r+{\rm sinh}_{p,q}\,s \le 2\,{\rm sinh}_{p,q}\,
\left(\frac{r+s}{2}\right) \,.$ \\
\end{enumerate}
\end{lemma}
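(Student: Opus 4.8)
The plan is to obtain all four inequalities from a single principle. By Lemma~\ref{anvu}, a continuous positive function $f$ on an interval $(0,b)$ is $AA$-convex, i.e.\ $f(r)+f(s)\ge 2f\big(\tfrac{r+s}{2}\big)$, exactly when it is convex, and it satisfies the reversed midpoint inequality exactly when it is concave. Hence (1) and (4) are equivalent to concavity of ${\rm arcsin}_{p,q}$ and ${\rm sinh}_{p,q}$, while (2) and (3) are equivalent to convexity of ${\rm sin}_{p,q}$ and ${\rm arsinh}_{p,q}$. I would first restrict $r,s$ to the common subinterval on which all four functions are defined and positive, so that Lemma~\ref{anvu} applies verbatim.

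Next I would cut the work in half using the inverse relations ${\rm sin}_{p,q}={\rm arcsin}_{p,q}^{-1}$ and ${\rm sinh}_{p,q}={\rm arsinh}_{p,q}^{-1}$ together with Lemma~\ref{ku}. Both arc-functions are strictly increasing, so once their convexity type is known the type of the inverse is forced: concavity of the increasing function ${\rm arcsin}_{p,q}$ gives, by Lemma~\ref{ku}(3), convexity of ${\rm sin}_{p,q}$, which is (2); and convexity of the increasing function ${\rm arsinh}_{p,q}$ gives, by Lemma~\ref{ku}(1), concavity of ${\rm sinh}_{p,q}$, which is (4). Thus everything reduces to determining the convexity type of the two arc-functions.

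For that I would differentiate twice, starting from ${\rm arcsin}_{p,q}'(x)=(1-x^q)^{-1/p}$ and ${\rm arsinh}_{p,q}'(x)=(1+x^q)^{-1/p}$; the sign of the second derivative of each on the relevant interval fixes its convexity type, and it is exactly this computation that is to furnish the concavity of ${\rm arcsin}_{p,q}$ and the convexity of ${\rm arsinh}_{p,q}$ underlying (1) and (3).

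The main obstacle is precisely this sign determination. The four assertions split into two opposite-facing pairs, so the whole lemma rests on the signs of these two second derivatives, and a single slip would reverse every conclusion; I would therefore check them with care, keeping track of the interaction between the exponents $1/p$ and $q$ and the factor $x^{q-1}$ that appears upon differentiating $(1\mp x^q)^{-1/p}$. Once both signs are settled, Lemma~\ref{ku} transfers them to ${\rm sin}_{p,q}$ and ${\rm sinh}_{p,q}$, and Lemma~\ref{anvu} converts all four convexity statements into the stated midpoint inequalities.
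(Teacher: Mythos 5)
Your strategy---reduce all four items to convexity statements via Lemma \ref{anvu}, transfer the convexity type to the inverse functions via Lemma \ref{ku}, and settle the type of the two arc-functions by differentiating---is structurally identical to the paper's own proof (your derivative formulas even correct a typo in the paper, which writes $x^p$ where $x^q$ is meant). But the step you yourself single out as the main obstacle is precisely where the argument breaks, and it breaks irreparably: the signs come out opposite to what your scheme requires. From ${\rm arcsin}_{p,q}'(x)=(1-x^q)^{-1/p}$ one gets
$$ {\rm arcsin}_{p,q}''(x)=\frac{q}{p}\,x^{q-1}(1-x^q)^{-1/p-1}>0 ,$$
so ${\rm arcsin}_{p,q}$ is \emph{convex} on $(0,1)$, not concave; likewise
$$ {\rm arsinh}_{p,q}''(x)=-\frac{q}{p}\,x^{q-1}(1+x^q)^{-1/p-1}<0 ,$$
so ${\rm arsinh}_{p,q}$ is \emph{concave}, not convex. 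Lemma \ref{ku} then forces $\sin_{p,q}$ to be concave and $\sinh_{p,q}$ to be convex, and Lemma \ref{anvu} yields the four midpoint inequalities with every sign reversed relative to the statement. No care in tracking the exponents will produce the concavity of ${\rm arcsin}_{p,q}$ or the convexity of ${\rm arsinh}_{p,q}$ on which your plan rests.

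The deeper point is that the statement as printed is false, so the failure is not yours. Already at $p=q=2$ item (1) fails: the classical $\arcsin$ is convex on $(0,1)$, and indeed $\arcsin(0.1)+\arcsin(0.9)\approx 1.22 > 1.05 \approx 2\arcsin(0.5)$. The paper's own proof computes the same derivatives you propose to compute, concludes (correctly) that ${\rm arcsin}_{p,q}$ is convex and ${\rm arsinh}_{p,q}$ is concave, and therefore actually proves the four inequalities with the opposite orientation; the signs displayed in the lemma are inverted relative to what its proof establishes. So an honest execution of your plan---accepting the signs the computation gives rather than the ones the statement demands---reproduces the paper's argument and proves the corrected statement. (Your precaution of restricting $r,s$ to a common subinterval is sensible and addresses a genuine secondary defect, since $\sinh_{p,q}$ is only defined on $(0,m_{p,q})$ with $m_{p,q}<1$, but it is immaterial next to the sign issue.)
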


\begin{proof} Let $f(x)={\rm arcsin}_{p,q}\,x$ and $g(x)={\rm arsinh}_{p,q}\,x \,.$ Then
$$f'(x)=(1 - x^p)^{-1/p},\quad g'(x)=(1 + x^p)^{-1/p}$$
are increasing and decreasing, respectively. This implies that $f$ and $g$ are convex and concave. Now it follows from
Lemma \ref{ku}(1),(3) that $f^{-1}$ and $g^{-1}$ are concave and convex, respectively. The proof follows
from Lemma \ref{anvu}.
\end{proof}

\bigskip

For the following inequalities see \cite[Corollary 1.26]{bari} and
\cite[Corollary 1.10]{avv2}:
for all $x,y\in(0,\infty)$,
$$\cosh(\sqrt{x\,y})\leq \sqrt{\cosh(x)\,\cosh(y)}\,,$$
$$\sinh(\sqrt{x\,y})\leq \sqrt{\sinh(x)\,\sinh(y)}\,,$$
with equality if and only if $x=y$.

On the basis of our computer experiments we have arrived at the following conjecture.

\bigskip

\begin{conjecture} For $p,q\in(1,\infty)$ and $r,s\in(0,1)$, we have
\bigskip

\begin{enumerate}
\item ${\rm sin}_{p,q}(\sqrt{r\,s})\leq \sqrt{{\rm sin}_{p,q}(r){\rm sin}_{p,q}(s)} \,,$
\bigskip

\item ${\rm sinh}_{p,q}(\sqrt{r\,s})\geq \sqrt{{\rm sinh}_{p,q}(r){\rm sinh}_{p,q}(s)} \,.$
\bigskip
\end{enumerate}
\end{conjecture}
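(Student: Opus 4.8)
The plan is to read both inequalities as geometric--geometric (GG) convexity statements and to attack them with the substitution $h(x)=\log f(e^{x})$ already exploited in the proof of Theorem \ref{pf1}. Writing $r=e^{a}$ and $s=e^{b}$ and taking logarithms, inequality (1) is exactly the midpoint (hence, by continuity, the full) convexity of
$$\phi(x)=\log\big(\sin_{p,q}(e^{x})\big),$$
while inequality (2) is the concavity of
$$\psi(x)=\log\big(\sinh_{p,q}(e^{x})\big).$$
Indeed, with $a=\log r$, $b=\log s$ one has $\phi\big(\tfrac{a+b}{2}\big)\le\tfrac12(\phi(a)+\phi(b))$ iff $\sin_{p,q}(\sqrt{rs})\le\sqrt{\sin_{p,q}(r)\sin_{p,q}(s)}$, and the concave inequality for $\psi$ matches (2) verbatim. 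Thus the entire statement reduces to deciding the sign of $\phi''$ and $\psi''$ on the relevant ranges of $x$.

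Next I would turn these into autonomous conditions using the differential equations of the two functions. Differentiating ${\rm arcsin}_{p,q}(\sin_{p,q}(u))=u$ and ${\rm arsinh}_{p,q}(\sinh_{p,q}(u))=u$ gives, with $S=\sin_{p,q}$ and $T=\sinh_{p,q}$,
$$S'(u)=\big(1-S(u)^{q}\big)^{1/p},\qquad T'(u)=\big(1+T(u)^{q}\big)^{1/p}.$$
Since $x\mapsto e^{x}$ is increasing, $\phi$ is convex iff the elasticity $E_{S}(u)=uS'(u)/S(u)$ is increasing, and $\psi$ is concave iff $E_{T}(u)=uT'(u)/T(u)$ is decreasing. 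Computing $\tfrac{d}{du}\log E_{S}=\tfrac1u+S''/S'-S'/S$ and substituting the ODE, then passing to the variable $w=S(u)$ (so $u={\rm arcsin}_{p,q}(w)$, $w\in(0,1)$), the required monotonicity of $E_{S}$ becomes the pointwise inequality
$$\frac{1}{{\rm arcsin}_{p,q}(w)}\ \ge\ \frac{q}{p}\,w^{q-1}(1-w^{q})^{1/p-1}+\frac{(1-w^{q})^{1/p}}{w}.$$
The companion condition for $E_{T}$, obtained with $w=T(u)$ and $u={\rm arsinh}_{p,q}(w)$, is the analogous inequality with $1+w^{q}$ in place of $1-w^{q}$ and the inequality reversed. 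In each case the target (1)--(2) follows once the corresponding pointwise estimate is secured.

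The main obstacle is precisely the uniform verification of these two pointwise inequalities for all $p,q>1$. Here I would insert the explicit hypergeometric representations ${\rm arcsin}_{p,q}(w)=w\,F(1/p,1/q;1+1/q;w^{q})$ and ${\rm arsinh}_{p,q}(w)=w\,F(1/p,1/q;1+1/q;-w^{q})$, together with the sharp two-sided bounds for ${\rm arcsin}_{p,q}$ and ${\rm arsinh}_{p,q}$ proved in Theorem \ref{d}, which control the delicate factor $1/{\rm arcsin}_{p,q}(w)$. The genuinely hard point --- and the reason the statement remains a conjecture --- is the endpoint behaviour: as $w\to 0^{+}$ and $w\to 1^{-}$ the leading terms on the two sides cancel, so the sign of the decisive derivative is determined only at the next order and by the interplay between the parameters $p$ and $q$. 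Pinning this remainder down uniformly over the whole parameter range $(1,\infty)^{2}$ is exactly the step that must be carried through; once it is, the GG-convexity of $\sin_{p,q}$ and the GG-concavity of $\sinh_{p,q}$ follow, and with them the inequalities (1) and (2) as stated.
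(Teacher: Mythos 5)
First, a point of framing: the statement you were asked to prove is labelled a \emph{conjecture} in the paper. The authors give no proof at all --- they only remark that it is suggested by computer experiments --- so there is no proof of record to compare against. Your submission is likewise not a proof: after a correct chain of reductions (the substitution $\phi(x)=\log \sin_{p,q}(e^{x})$ turning (1) into midpoint convexity, the ODE $S'=(1-S^{q})^{1/p}$, and the passage to the elasticity $uS'(u)/S(u)$), you arrive at the pointwise inequality $\frac{1}{\arcsin_{p,q}(w)}\ge \frac{q}{p}\,w^{q-1}(1-w^{q})^{1/p-1}+\frac{(1-w^{q})^{1/p}}{w}$ and then explicitly leave its verification open, calling it ``exactly the step that must be carried through.'' A reduction whose decisive step is missing is a gap, not a proof.

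Second, and more seriously, that decisive step cannot be carried out, because the inequality you need is false. Take $p=q=2$, so $\sin_{p,q}=\sin$ and $\arcsin_{p,q}=\arcsin$: your pointwise condition becomes $\frac{1}{\arcsin w}\ge \frac{w}{\sqrt{1-w^{2}}}+\frac{\sqrt{1-w^{2}}}{w}=\frac{1}{w\sqrt{1-w^{2}}}$, i.e.\ $\arcsin w\le w\sqrt{1-w^{2}}$, which fails for \emph{every} $w\in(0,1)$ since $\arcsin w>w>w\sqrt{1-w^{2}}$. Equivalently, $u\cot u$ is strictly decreasing, so $\phi$ is concave rather than convex, and in the classical case the inequality runs the other way: $\sin(\sqrt{rs})\ge\sqrt{\sin r\,\sin s}$. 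The same happens with (2): the paper itself quotes, just above the conjecture, the inequality $\sinh(\sqrt{xy})\le\sqrt{\sinh x\,\sinh y}$ of Baricz and Anderson--Vamanamurthy--Vuorinen, which is the reverse of (2) at $p=q=2$. In fact the reverse of both conjectured inequalities follows for all $p,q>1$ from the paper's own Theorem \ref{pf1}: since $\arcsin_{p,q}$ is increasing and satisfies $\arcsin_{p,q}(\sqrt{xy})\le\sqrt{\arcsin_{p,q}(x)\arcsin_{p,q}(y)}$, evaluating at $x=\sin_{p,q}(a)$, $y=\sin_{p,q}(b)$ and applying $\sin_{p,q}$ gives $\sin_{p,q}(\sqrt{ab})\ge\sqrt{\sin_{p,q}(a)\sin_{p,q}(b)}$; the analogous argument with part (2) of Theorem \ref{pf1} gives $\sinh_{p,q}(\sqrt{ab})\le\sqrt{\sinh_{p,q}(a)\sinh_{p,q}(b)}$. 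So the inequality signs in the printed conjecture are evidently reversed, and no amount of work on the ``endpoint behaviour'' can rescue a proof of the printed directions: your machinery, pushed through, proves the opposite of what is claimed. (With the signs flipped, your elasticity framework is a legitimate route, but the inverse-function argument just sketched is shorter and already complete.)
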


\begin{remark} {\rm
Edmunds, Gurka and Lang \cite[Prop. 3.4]{egl} proved that for
$x \in [0, \pi_{4/3,4}/4)  $
\begin{equation} \label{EGLidty}
    \sin_{4/3,4}(2x) = \frac{ 2 u v ^{1/3}}{(1+ 4 u^4  v^{4/3})^{1/2}} \,,\quad   u= \sin_{4/3,4}(x),\, v=  \cos_{4/3,4}(x)  \,.
\end{equation}
Note that in this case $q=p/(p-1)\,.$
The Edmunds-Gurka-Lang identity \eqref{EGLidty} suggests that in the particular case $q=p/(p-1)$
some exceptional behavior might be expected for $\sin_{p,q}\,.$
This special case might be worth of further investigation.}
\end{remark}

It seems to be a natural question to ask whether the addition formulas
for the trigonometric functions have counterparts for the $(p,q)$-functions. Our next results gives a subadditive inequality.

\begin{lemma} For $p,q>1$, the following inequalities hold
\begin{enumerate}
\item $\sin_{p,q}(r+s)\leq \sin_{p,q}(r)+\sin_{p,q}(s)\,,\quad\, r, s\in(0,\pi_{p,q}/4)\,,$
\item $\sinh_{p,q}(r+s)\geq \sinh_{p,q}(r)+\sinh_{p,q}(s)\,,\,\quad r, s\in(0,\infty)\,$.
\end{enumerate}
\end{lemma}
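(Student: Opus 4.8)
The plan is to prove both inequalities via convexity/concavity of the relevant functions together with the normalizations at the endpoints, exploiting that subadditivity on $(0,c)$ follows from concavity when the function vanishes at $0$.

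For part (2), I would argue directly from concavity-type properties. Recall from the proof of the preceding lemma that $g(x)={\rm arsinh}_{p,q}\,x$ is concave with $g(0)=0$, and by Lemma \ref{ku}(3) its inverse ${\rm sinh}_{p,q}$ is convex on its domain, again with ${\rm sinh}_{p,q}(0)=0$. A nonnegative convex function $f$ on $[0,\infty)$ with $f(0)=0$ is \emph{superadditive}: for $r,s>0$ one writes, with $t=r/(r+s)$,
\[
f(r)=f\bigl(t(r+s)\bigr)\le t\,f(r+s),\qquad
f(s)=f\bigl((1-t)(r+s)\bigr)\le (1-t)\,f(r+s),
\]
and adding gives $f(r)+f(s)\le f(r+s)$. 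Applying this to $f={\rm sinh}_{p,q}$ yields $\sinh_{p,q}(r+s)\ge \sinh_{p,q}(r)+\sinh_{p,q}(s)$ for all $r,s\in(0,\infty)$, which is exactly (2). The only thing to check is that ${\rm sinh}_{p,q}$ is defined on all of $(0,\infty)$; since $m_{p,q}$ is finite, I would instead note that superadditivity holds on the interval $(0,m_{p,q})$ where ${\rm sinh}_{p,q}$ is convex, and then verify that the stated domain $(0,\infty)$ matches the convention under which $\sinh_{p,q}$ is extended, or restrict the claim accordingly.

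For part (1), the situation is the mirror image but genuinely harder, because subadditivity of a concave function $f$ with $f(0)=0$ is automatic on its whole domain, yet ${\rm sin}_{p,q}$ is concave (being the inverse of the convex, increasing ${\rm arcsin}_{p,q}$, by Lemma \ref{ku}(1)) and satisfies $\sin_{p,q}(0)=0$. The same splitting argument as above, with the inequalities reversed by concavity, gives $\sin_{p,q}(r)+\sin_{p,q}(s)\ge\sin_{p,q}(r+s)$ directly. The restriction $r,s\in(0,\pi_{p,q}/4)$ is what guarantees $r+s\in(0,\pi_{p,q}/2)$, so that $\sin_{p,q}(r+s)$ lies in the range where ${\rm sin}_{p,q}$ is defined and concave; without it the argument $r+s$ could exceed $\pi_{p,q}/2$.

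The main obstacle I anticipate is bookkeeping about domains rather than any deep inequality: I must ensure that every evaluation point stays inside the interval of definition and of the established convexity/concavity. Concretely, in (1) the binding constraint is $r+s<\pi_{p,q}/2$, handled by the hypothesis $r,s<\pi_{p,q}/4$; in (2) I must reconcile the finite domain $(0,m_{p,q})$ of ${\rm sinh}_{p,q}$ with the stated range $(0,\infty)$. Once the domains are pinned down, both parts reduce to the elementary fact that functions vanishing at $0$ are superadditive when convex and subadditive when concave, so I expect the write-up to be short, with the convexity/concavity of $\sin_{p,q}$ and $\sinh_{p,q}$ imported verbatim from the proof of the earlier lemma via Lemma \ref{ku}.
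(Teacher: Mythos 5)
Your proposal is correct and follows essentially the same route as the paper: deduce concavity of $\sin_{p,q}$ (resp.\ convexity of $\sinh_{p,q}$) from Lemma \ref{ku} applied to the convex increasing ${\rm arcsin}_{p,q}$ (resp.\ concave increasing ${\rm arsinh}_{p,q}$), and combine this with vanishing at the origin to obtain sub- (resp.\ super-) additivity; the only difference is that the paper reaches the same conclusion by citing \cite{avvb} (monotonicity of $f_1(y)/y$ and then subadditivity), whereas you prove the elementary splitting inequality directly. Your concern about reconciling the domain $(0,m_{p,q})$ of $\sinh_{p,q}$ with the stated range $(0,\infty)$ is legitimate, but it is an issue with the paper's own statement, which its one-line treatment of part (2) does not address either.
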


\begin{proof} Let $f(x)={\rm arcsin}_{p,q}(x)$,\,
$x\in(0,1)$. We get
$$f^{'}(x)=(1-x^q)^{-1/p}\,,$$
which is increasing, hence $f$ is convex.
Clearly, $f$ is increasing. Therefore
$$f_1=f^{-1}(y)=\sin_{p,q}(y)$$
is concave by Lemma \ref{ku}(1). This implies that
$f_1^{'}$ is decreasing. Clearly $f_1(0)=0$, and by
\cite[Thm 1.25]{avvb}, $f_1(y)/y$
is decreasing. Now it follows from \cite[Lem 1.24]{avvb}
that
$$f_1(r+s)\leq f_1(r)+f_1(s),$$
and (1) follows. The proofs of part (2) follows similarly.
\end{proof}

For $p,q>1,\,x\in(0,1)$ and $z\in(0,\pi_{p,q}/2)$, it follows from Theorem \ref{d} that
$${\rm arsinh}_{p,q}\,x < {\rm arcsin}_{p,q}\,x,\quad \sin_{p,q}\,z<\sinh_{p,q}\,z\,.$$

\begin{lemma} For $p,q>1,\,s\in(0,r]$ and $r\in(0,1)$, we have
\begin{enumerate}
\item $\displaystyle\frac{{\rm arcsin}_{p,q}\,s}{s} \le \displaystyle\frac{{\rm arcsin}_{p,q}\,r}{r},$\\
\item $\displaystyle\frac{{\rm arsinh}_{p,q}\,s}{\sqrt[p]{s^p/(1+s^q)}}  \le \displaystyle \frac{{\rm arsinh}_{p,q}\,r}{\sqrt[p]{r^p/(1+r^q)}},$\\
\item $\displaystyle\frac{{\rm arsinh}_{p,q}\,s}{s} \ge \displaystyle\frac{{\rm arsinh}_{p,q}\,r}{r}$.
\end{enumerate}
\end{lemma}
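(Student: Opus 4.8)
The plan is to read all three inequalities as monotonicity statements for a ratio of the form $F/H$ with $F(0)=H(0)=0$, and to apply the monotone form of l'Hôpital's rule \cite[Thm 1.25]{avvb}: if $H'$ has constant sign on $(0,1)$ and $F'/H'$ is monotone there, then $F/H$ is monotone in the same sense. Each statement then reduces to evaluating the resulting increasing (resp.\ decreasing) ratio at $s\le r$.

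For (1) I take $F(x)={\rm arcsin}_{p,q}\,x$ and $H(x)=x$, so that $F'/H'=(1-x^q)^{-1/p}$ is increasing on $(0,1)$; hence $F/H={\rm arcsin}_{p,q}(x)/x$ is increasing and (1) follows. For (3) I take $F(x)={\rm arsinh}_{p,q}\,x$ and $H(x)=x$, where $F'/H'=(1+x^q)^{-1/p}$ is decreasing, so $F/H$ is decreasing and (3) follows. Both are also immediate from the convexity of ${\rm arcsin}_{p,q}$ and the concavity of ${\rm arsinh}_{p,q}$ noted above, since the slope from the origin of a convex (concave) function vanishing at $0$ is monotone.

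The content is in (2). Put $w(x)=\sqrt[p]{x^p/(1+x^q)}=x(1+x^q)^{-1/p}$ and $F(x)={\rm arsinh}_{p,q}\,x$; I must show $F/w$ is increasing. A direct differentiation gives
$$w'(x)=(1+x^q)^{-1/p-1}\Big(1+x^q\big(1-\tfrac qp\big)\Big),\qquad \frac{F'(x)}{w'(x)}=\frac{1+x^q}{1+x^q(1-q/p)},$$
and, writing $t=x^q$, the quotient $\tfrac{1+t}{1+t(1-q/p)}$ has $t$-derivative $(q/p)\big(1+t(1-q/p)\big)^{-2}>0$, so $F'/w'$ is increasing wherever $w'\neq0$. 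When $q\le 2p$ one has $w'>0$ on all of $(0,1)$, because $x^q/(1+x^q)<\tfrac12\le p/q$ there; thus l'Hôpital's rule applies directly and $F/w$ is increasing.

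The main obstacle is the remaining case $q>2p$, in which $w$ fails to be monotone: $w'$ vanishes at $x_0=(p/(q-p))^{1/q}\in(0,1)$, being positive on $(0,x_0)$ and negative on $(x_0,1)$, so the rule cannot be run on the whole interval. I will split the argument. On $(0,x_0)$ the hypotheses of \cite[Thm 1.25]{avvb} hold and give $F/w$ increasing. On $(x_0,1)$ the sign of the derivative is transparent: $(F/w)'=(F'w-Fw')/w^2$, and there $F'>0$, $w>0$, $F>0$, $w'<0$, whence $F'w-Fw'>0$. Since $F/w$ is continuous on $(0,1)$ and increasing on each of $(0,x_0]$ and $[x_0,1)$, it is increasing on the whole of $(0,1)$, and evaluating at $s\le r$ yields (2).
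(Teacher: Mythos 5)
Your proposal is correct, but for part (2) it takes a genuinely different route from the paper. The paper proves (1) and (2) in one stroke from hypergeometric representations: for (1) it writes ${\rm arcsin}_{p,q}\,x = x\,F(1/p,1/q;1+1/q;x^q)$ and uses that $F$ is increasing in its argument, and for (2) it invokes the transformation \eqref{as} (already used in the proof of Theorem \ref{d}(2)) to write ${\rm arsinh}_{p,q}\,x=\bigl(x^p/(1+x^q)\bigr)^{1/p}F\bigl(1,1/p;1+1/q;x^q/(1+x^q)\bigr)$, whence the ratio in (2) is exactly the weight times a ratio of $F$-values at the increasing arguments $s^q/(1+s^q)\le r^q/(1+r^q)$, and the conclusion is immediate; only part (3) is handled via the monotone l'H\^opital rule \cite[Thm 1.25]{avvb}. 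Your argument replaces this with direct calculus: l'H\^opital's monotone form for (1) and (3) (equivalent to the paper's reasoning), and for (2) an explicit computation of $F'/w'$ together with a case split at $q=2p$, since $w(x)=x(1+x^q)^{-1/p}$ fails to be monotone when $q>2p$ and the rule cannot be applied on all of $(0,1)$. You correctly identify and repair this obstacle by a sign analysis of $(F/w)'$ on $(x_0,1)$, and all your computations check out. What the paper's route buys is brevity and the complete avoidance of the non-monotonicity issue (the hypergeometric argument $x^q/(1+x^q)$ is monotone regardless of $p,q$); what your route buys is self-containedness, as it needs neither the identity \eqref{as} nor the monotonicity of the hypergeometric series in its argument.
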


\begin{proof}
By definition we get
$$\frac{{\rm arcsin}_{p,q}\,s}{{\rm arcsin}_{p,q}\,r}=
\frac{s}{r}\frac{F(1/p,1/q;1+1/q;s^q)}{F(1/p,1/q;1+1/q;r^q)}\le \frac{s}{r}.$$
Similarly,
$$\frac{{\rm arsinh}_{p,q}\,s}{{\rm arsinh}_{p,q}\,r}=
\frac{s/(1+s^q)^{1/p}}{r/(1+r^q)^{1/p}}\frac{F(1,1/p;1+1/q;s^q/(1+s^q))}{F(1,1/p;1+1/q;r^q/(1+r^q))}
\le\left(\frac{s/(1+s^q)}{r/(1+r^q)}\right)^{1/p}$$
because $F(a,b,;c;x)$ is increasing in $x$. Part (3) follows from \cite[Theorem 1.25]{avvb}.
\end{proof}
\section{Proof of the main results}

For the following lemma see \cite[Theorems 1.19(10), 1.52(1), Lemmas, 1.33, 1.35]{avvb}.

\begin{lemma}\label{avb} \begin{enumerate}
\item For $a,b,c>0$, $c<a+b$, and $|x|<1$,
$$F(a,b;c;x)=(1-x)^{c-a-b}F(c-a,c-b;c;x)\,.$$
\item For $a,x\in(0,1)$, and  $b,c\in(0,\infty)$
$$F(-a,b;c;x)<1-\frac{a\,b}{c}\,x\,.$$
\item For $a,x\in(0,1)$, and  $b,c\in(0,\infty)$
$$F(a,b;c;x)+F(-a,b;c;x)>2\,.$$
\item Let $a,b,c\in(0,\infty)$ and $c>a+b$. Then for $x\in[0,1]$,
$$F(a,b;c;x)\leq \frac{\Gamma(c)\Gamma(c-a-b)}{\Gamma(c-a)\Gamma(c-b)}\,.$$
\item For $a,b>0$, the following function
$$f(x)=\frac{F(a,b;a+b;x)-1}{\log(1/(1-x))}$$
is strictly increasing from $(0,1)$ onto $(a\,b/(a+b),1/B(a,b))$.
\end{enumerate}
\end{lemma}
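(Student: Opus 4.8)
The five assertions rest on different classical properties of the Gaussian hypergeometric function, so the plan is to dispatch them one at a time; all but the last are short, and even the last becomes clean once (1) is in hand.

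For (1) I would use Euler's transformation. The quickest self-contained route is the Euler integral representation
$$F(a,b;c;x)=\frac{\Gamma(c)}{\Gamma(b)\Gamma(c-b)}\int_0^1 t^{b-1}(1-t)^{c-b-1}(1-xt)^{-a}\,dt\qquad(c>b>0),$$
followed by the substitution $t\mapsto 1-t$, which after pulling $(1-x)^{c-a-b}$ out of the integrand recovers $F(c-a,c-b;c;x)$. Alternatively, both sides satisfy the hypergeometric differential equation and agree to first order at $x=0$, which forces them to coincide. The restriction $c<a+b$ is irrelevant to the identity itself and merely records the regime in which it is later applied.

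Parts (2) and (3) I would read off from the power series. Writing $(-a,n)=(-1)^n a(a-1)\cdots(a-n+1)$, for $a\in(0,1)$ each factor $a-k$ with $k\ge 1$ is negative, so $(-a,n)<0$ for every $n\ge 1$; since the remaining coefficients $(b,n)/(c,n)$ and $x^n/n!$ are positive, all terms of $F(-a,b;c;x)$ beyond the constant are negative, and separating the $n=1$ term $-\tfrac{ab}{c}x$ yields (2) with strict inequality for $x\in(0,1)$. For (3) I would compare $(a,n)$ with $|(-a,n)|=a\prod_{k=1}^{n-1}(k-a)$ factorwise: the ratio $\prod_{k=1}^{n-1}\frac{a+k}{k-a}$ exceeds $1$ because $a+k>k-a$ for each $k\ge 1$, so $(a,n)+(-a,n)$ equals $0$ when $n=1$ and is strictly positive for $n\ge 2$; summing against the positive coefficients gives $F(a,b;c;x)+F(-a,b;c;x)>2$. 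For (4) I would combine the fact that $F(a,b;c;x)$ has nonnegative series coefficients, hence is increasing on $[0,1]$, with Gauss's summation theorem $F(a,b;c;1)=\Gamma(c)\Gamma(c-a-b)/(\Gamma(c-a)\Gamma(c-b))$, valid precisely when $c>a+b$; the bound $F(a,b;c;x)\le F(a,b;c;1)$ is the claim.

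The substantive part is (5), and here I would use the monotone form of l'H\^opital's rule. Writing $f=f_1/f_2$ with $f_1=F(a,b;a+b;x)-1$ and $f_2=\log(1/(1-x))$, both vanishing at $0$, the rule reduces monotonicity of $f$ to that of
$$\frac{f_1'}{f_2'}=(1-x)\,F'(a,b;a+b;x)=\frac{ab}{a+b}\,(1-x)\,F(a+1,b+1;a+b+1;x),$$
where I have used $F'(a,b;c;x)=\tfrac{ab}{c}F(a+1,b+1;c+1;x)$. Applying Euler's transformation from (1) to $F(a+1,b+1;a+b+1;x)$ (whose parameters satisfy $c-a-b=-1$) collapses the factor $(1-x)$ and gives
$$\frac{f_1'}{f_2'}=\frac{ab}{a+b}\,F(a,b;a+b+1;x),$$
which is increasing because its coefficients are positive; the monotone l'H\^opital rule then makes $f$ strictly increasing. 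The same identity delivers the endpoints: at $x=0$ it gives $f\to ab/(a+b)$, while at $x=1$ Gauss's theorem evaluates $F(a,b;a+b+1;1)=\Gamma(a+b+1)/(\Gamma(a+1)\Gamma(b+1))$, which after the recurrence $\Gamma(t+1)=t\Gamma(t)$ simplifies to $f\to 1/B(a,b)$. The main obstacle is thus entirely front-loaded into the Euler transformation of (1) and the borderline Gauss summation; once those are available the monotonicity and both limits follow mechanically, which is no doubt why the authors simply cite \cite{avvb}.
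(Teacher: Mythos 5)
The paper does not actually prove this lemma: it is quoted verbatim from \cite{avvb} (Theorems 1.19(10) and 1.52(1) and Lemmas 1.33 and 1.35 there), so your self-contained argument is a genuine addition rather than a variant of anything in the text. Your proofs are correct and are essentially the standard ones: (2) and (3) from the sign pattern of the Appell symbols $(\pm a,n)$ for $a\in(0,1)$, (4) from positivity of the Taylor coefficients plus Gauss's summation at $x=1$, and (5) by the monotone l'H\^opital rule applied to $f_1'/f_2'=\frac{ab}{a+b}(1-x)F(a+1,b+1;a+b+1;x)=\frac{ab}{a+b}F(a,b;a+b+1;x)$, which is exactly how Lemma 1.33 is proved in \cite{avvb}; the limit $1/B(a,b)$ at $x=1$ then comes from the ordinary $\infty/\infty$ l'H\^opital rule together with the logarithmic blow-up of the zero-balanced hypergeometric function. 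The one point to tighten is (1): the substitution $t\mapsto 1-t$ in the Euler integral does not produce $(1-x)^{c-a-b}F(c-a,c-b;c;x)$ in a single step --- after pulling out $(1-x)^{-a}$ the remaining integral has argument $x/(x-1)$, i.e.\ you obtain Pfaff's transformation, and Euler's identity requires applying Pfaff twice (or your alternative ODE argument, which does work: both sides are analytic at $0$ with value $1$ and derivative $ab/c$ and both satisfy the hypergeometric equation). Note also that the integral representation needs $c>b>0$, which is not assumed in the statement, whereas the ODE/analytic-continuation route covers all $a,b,c>0$; you are right that the hypothesis $c<a+b$ plays no role in the identity itself and only records the regime in which the paper applies it.
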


We will refer in our proofs to the following identity  \cite[15.3.5]{AS}:
\begin{equation}\label{as}
F(a,b;c;z)=(1-z)^{-b}F(b,c-a;c;-z/(1-z))\, .
\end{equation}

\begin{lemma}\label{cinq}\cite[Thm 2]{car} For $0<a<c,\,-\infty<x<1$ and $0<b<c$, the following inequality holds
$$\max\left\{\left(1-\frac{b\,x}{c}\right)^{-a},(1-x)^{c-a-b}\left(1-x+\frac{b\,x}{c}\right)^{a-c}\right\}
<F(a,b;c;x)<(1-x)^{-ab/c}\,.$$
\end{lemma}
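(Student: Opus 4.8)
The inequalities are Carlson's; I would reconstruct a proof from the Maclaurin expansion $F(a,b;c;x)=\sum_{n\ge0}\frac{(a,n)(b,n)}{(c,n)\,n!}x^n$ together with the Pfaff and Euler transformations already available in \eqref{as} and Lemma~\ref{avb}(1). The plan is to split the range into $0<x<1$ and $x<0$ (at $x=0$ all three members equal $1$, which is the equality case), to treat the positive range by a term‑by‑term coefficient comparison, and to reduce the second lower‑bound term to the first by Euler's transformation. On $0<x<1$ every coefficient is positive, so both estimates follow by comparing series coefficients, and the comparison is handled uniformly in $n$ through the ratio of consecutive coefficients. Writing $r_n=(a,n)(b,n)/(c,n)$ for the numerator of the $F$‑coefficient, the upper bound $(1-x)^{-ab/c}=\sum_n\frac{(ab/c,\,n)}{n!}x^n$ reduces to $r_n\le(ab/c,n)$; since $r_1=(ab/c,1)=ab/c$ and
\[ \frac{r_{n+1}}{r_n}-\Big(\frac{ab}{c}+n\Big)=\frac{(a+n)(b+n)}{c+n}-\frac{ab+nc}{c}=\frac{-\,n\,(c-a)(c-b)}{c\,(c+n)}<0, \]
the $F$‑ratios are dominated by those of the comparison series, and telescoping from $n=1$ gives $r_n\le(ab/c,n)$, strict for $n\ge2$; summing against $x^n>0$ yields the strict upper bound. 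The first lower bound $(1-bx/c)^{-a}=\sum_n\frac{(a,n)}{n!}(b/c)^n x^n$ is the mirror computation: it amounts to $(b,n)/(c,n)\ge(b/c)^n$, which holds because each factor $(b+k)/(c+k)$ increases in $k$ (as $b<c$), hence is $\ge b/c$, strictly for $k\ge1$.

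For $x<0$ the series alternate and term‑by‑term comparison fails, so I would pass to a positive argument by Pfaff's identity \eqref{as}. For $x<0$ the point $w=-x/(1-x)$ lies in $(0,1)$ and $1-w=1/(1-x)$. Applying the already‑proved positive‑argument estimates to $F(b,c-a;c;w)$ and using $F(a,b;c;x)=(1-x)^{-b}F(b,c-a;c;w)$, the powers of $(1-x)$ collapse: the upper bound gives $(1-x)^{-b}(1-w)^{-b(c-a)/c}=(1-x)^{-ab/c}$, and the lower bound gives $(1-x)^{-b}\bigl(1-(c-a)w/c\bigr)^{-b}=(1-ax/c)^{-b}$, i.e.\ the first lower bound in its $a\leftrightarrow b$ symmetric form (the companion $(1-bx/c)^{-a}$ following from the other Pfaff identity, with $a$ and $b$ interchanged).

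The second lower‑bound term I would deduce from the first, with no new estimate, via Euler's transformation $F(a,b;c;x)=(1-x)^{c-a-b}F(c-a,c-b;c;x)$ of Lemma~\ref{avb}(1). Since $0<c-a<c$ and $0<c-b<c$, the first lower bound applies to $F(c-a,c-b;c;x)$ and gives $F(c-a,c-b;c;x)>\bigl(1-(c-b)x/c\bigr)^{-(c-a)}$; because $1-(c-b)x/c=1-x+bx/c$ and $-(c-a)=a-c$, multiplying by $(1-x)^{c-a-b}$ produces exactly $(1-x)^{c-a-b}(1-x+bx/c)^{a-c}$.

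The main obstacle is the negative‑argument range: the clean coefficient comparison only controls $0<x<1$, and one must check that the Pfaff and Euler reductions carry the estimates to $x<0$ with the precise exponents intact (the collapse of the $(1-x)$‑powers above is where this is verified). A secondary subtlety is that Lemma~\ref{avb}(1) is stated only for $|x|<1$, so for $x\le-1$ I would invoke the fact that Euler's identity persists throughout $\mathbf{C}\setminus[1,\infty)$ by analytic continuation; combined with the first lower bound, now valid on all of $(-\infty,1)$ via the Pfaff step, this yields the second term on the full range.
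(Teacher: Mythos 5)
This lemma is not proved in the paper at all: it is quoted, with attribution, as Carlson's theorem \cite[Thm 2]{car}, so there is no internal proof to compare yours against; the comparison is therefore between your reconstruction and the cited source. Your proof is correct. The coefficient computation behind the upper bound checks out: with $r_n=(a,n)(b,n)/(c,n)$ one indeed gets $r_{n+1}/r_n-(ab/c+n)=-n(c-a)(c-b)/(c(c+n))\le 0$, with equality only at $n=0$, so telescoping gives $r_n\le (ab/c,n)$, strictly for $n\ge 2$; the first lower bound reduces to $(b+k)/(c+k)\ge b/c$, strict for $k\ge 1$; and your Pfaff reduction for $x<0$ does collapse the powers correctly, since with $w=-x/(1-x)$ one has $1-w=1/(1-x)$ and $1-(c-a)w/c=(c-ax)/(c(1-x))$, giving $(1-x)^{-b}(1-w)^{-b(c-a)/c}=(1-x)^{-ab/c}$ for the upper bound and $(1-ax/c)^{-b}$ for the lower one, with the $a\leftrightarrow b$ symmetric Pfaff identity supplying $(1-bx/c)^{-a}$; finally the Euler step applied at parameters $(c-a,c-b,c)$ reproduces the second lower-bound term exactly. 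What your route buys is self-containedness: everything is done by elementary series comparison plus the two transformation formulas already present in the paper, whereas Carlson's argument in the cited source proceeds via the Euler integral representation of $F$ and inequalities between means, machinery the present paper never introduces. Two points to tighten: Lemma \ref{avb}(1) as stated in the paper carries the extra hypothesis $c<a+b$, which you never verify and which can fail for the parameters you need, so you should invoke Euler's transformation directly as an identity on the slit plane (valid for all admissible parameters by analytic continuation) for every $x<1$, not only for $x\le -1$; and, as you yourself observe, at $x=0$ all three quantities equal $1$, so the strict inequality can only hold for $x\neq 0$ --- a blemish in the quoted statement rather than in your argument.
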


{\bf Proof of Theorem \ref{d}.} For (1), we get from Lemma \ref{avb} (3),(2)
\begin{eqnarray*}
{\rm arcsin}_{p,q}\,x&=& x\,F\left(\frac{1}{p},\frac{1}{q};1+\frac{1}{q};x^q\right)\\
&>&\left(2-F\left(-\frac{1}{p},\frac{1}{q};1+\frac{1}{q};x^q\right)\right)x\\
&>&x\left (1+\displaystyle\frac{x^q}{p(1+q)}\right).
\end{eqnarray*}
The  second inequality of (1) follows easily from Lemmas \ref{cinq} and \ref{avb}(4).

For (2), if we replace $b=1/q,c-a=1/q, c=1+1/q$ and $x^q=z/(1-z)$ in (\ref{as}) then we get

\begin{eqnarray*}
{\rm arsinh}_{p,q}\,x&=& x\,F\left(\frac{1}{p},\frac{1}{q};1+\frac{1}{q};-x^q\right)\\
&=&\left(\frac{x^p}{1+x^q}\right)^{1/p}
F\left(1,\frac{1}{p};1+\frac{1}{q};\frac{x^q}{1+x^q}\right),
\end{eqnarray*}
now the proof follows easily form Lemma \ref{cinq}. \hfill $\square$

\begin{figure}
\includegraphics[width=12cm]{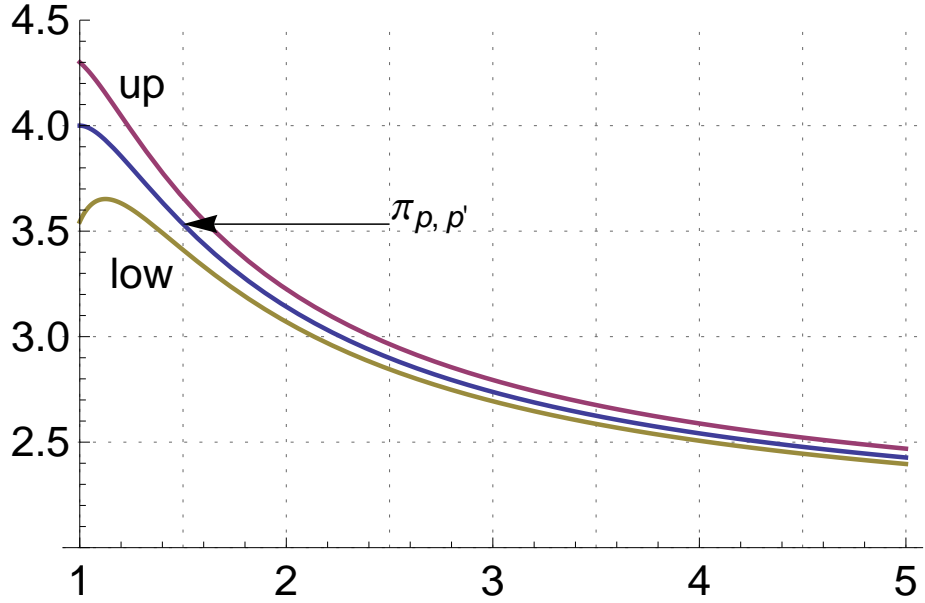}
\caption{We denote the lower and upper bounds of $\pi_{p,p'}$ by low and up.}
\end{figure}

\bigskip
For the following Lemma see \cite{al,ka,ke},\cite[Theorem 1]{kv},\cite{w}, respectively.

\begin{lemma}\label{ineq} The following relations hold,
\bigskip

\begin{enumerate}

\item $\displaystyle\sqrt{\pi}\left(\frac{x}{e}\right)^x\left(8x^3+4x^2+x+\frac{1}{100}\right)^{1/6}<\Gamma(1+x)$
$$<\sqrt{\pi}\left(\frac{x}{e}\right)^x\left(8x^3+4x^2+x+\frac{1}{30}\right)^{1/6},\quad x\geq 0,$$

\item $\displaystyle\left(x+\frac{s}{2}\right)^{1-s}<
    \displaystyle\frac{\Gamma(x+1)}{\Gamma(x+s)}<
\left(x-\frac{1}{2}+\left(\frac{1}{4}+s\right)^{1/2}\right)^{1-s}
,\quad x>0,\,s\in(0,1),$
\bigskip

\item $\displaystyle\frac{\Gamma(b)}{\Gamma(a)}<\displaystyle\frac{b^{b-1/2}}{a^{a-1/2}}e^{a-b},\quad b>a>0.$
\bigskip

\item $\displaystyle\left(\frac{x}{x+s}\right)^{1-s}\leq
\displaystyle\frac{\Gamma(x+s)}{x^s\Gamma(x)}\leq 1,
\quad x>0,\,s\in(0,1)$,
\bigskip
\end{enumerate}
\end{lemma}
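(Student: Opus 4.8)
The four displayed relations are classical inequalities for the gamma function, collected here from the sources \cite{al,ka,ke,kv,w}; accordingly the plan is not to reprove them from scratch but to indicate the standard route by which each is established. The common engine behind all of them is the logarithmic convexity of $\Gamma$ on $(0,\infty)$ (Bohr--Mollerup), equivalently the monotonicity of the psi function $\psi=\Gamma'/\Gamma$ and its derivatives, together with the Stirling asymptotics of $\Gamma$. For the ratio estimates (2) and (4) I would argue by monotonicity of a suitable auxiliary quotient. Inequality (4) is Wendel's double inequality, which follows directly from log-convexity of $\Gamma$ (equivalently from H\"older's inequality applied to its integral representation): the convexity estimate $\Gamma(x+s)\le \Gamma(x)^{1-s}\Gamma(x+1)^{s}=x^{s}\Gamma(x)$ yields the upper bound $1$, and the sharpness of the exponent $1-s$ is confirmed by letting $x\to\infty$.

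For (2) (Kershaw's inequalities) the plan is to consider the function
$$\Phi(x)=\log\Gamma(x+1)-\log\Gamma(x+s)-(1-s)\log\Big(x+\tfrac{s}{2}\Big)$$
and to show, using the series representation of $\psi'$, that $\Phi$ is monotone with limit $0$ at infinity; comparing $\Phi$ with the analogous function built from the point $x-\tfrac12+(\tfrac14+s)^{1/2}$ in place of $x+\tfrac{s}{2}$ produces the two-sided bound. Inequality (3) is the Ke\v{c}ki\'c--Vasi\'c estimate and follows at once from the fact that the Stirling remainder
$$\mu(x)=\log\Gamma(x)-\Big(x-\tfrac12\Big)\log x + x-\tfrac12\log(2\pi)$$
is positive and strictly decreasing on $(0,\infty)$: from $\mu(b)<\mu(a)$ for $b>a$ one gets $\log\Gamma(b)-\log\Gamma(a)<(b-\tfrac12)\log b-(a-\tfrac12)\log a-(b-a)$, and exponentiating rearranges into exactly the asserted inequality.

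The genuinely delicate statement is (1), which is the sharp form of Ramanujan's factorial approximation. Here the natural object is
$$\Psi(x)=\Big(\frac{\Gamma(1+x)}{\sqrt{\pi}\,(x/e)^{x}}\Big)^{6}-\big(8x^3+4x^2+x\big),$$
and the task is to prove that $\Psi$ is monotone on $(0,\infty)$, so that it is squeezed between its two endpoint values. A computation from the Stirling expansion shows $\Psi(x)\to\tfrac1{30}$ as $x\to\infty$, while $\Psi(0)=1/\pi^{3}=0.0322\ldots$; monotonicity then places $\Psi$ strictly between $1/\pi^{3}$ and $1/30$, and since $1/100<1/\pi^{3}$ one obtains the clean (slightly weakened at the lower end) constants $1/100$ and $1/30$. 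I expect this monotonicity to be the main obstacle: it does not come from log-convexity alone but requires careful asymptotic control of the higher Stirling terms, that is, sign analysis of several derivatives of $\Psi$, which is precisely the technical content of \cite{al}. Since all four inequalities are available in the cited references, in the present paper it suffices to quote them.
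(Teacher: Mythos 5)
The paper offers no proof of this lemma at all: it simply states ``For the following Lemma see \cite{al,ka,ke},\cite[Theorem 1]{kv},\cite{w}, respectively,'' which is exactly the conclusion you reach (``it suffices to quote them''), and your attributions match the paper's citations (Alzer for (1), Kershaw for (2), Ke\v{c}ki\'c--Vasi\'c for (3), Wendel for (4)). Your additional sketches --- the Binet-remainder argument for (3), log-convexity for (4), and the monotone auxiliary functions for (1) and (2) --- are accurate descriptions of the standard proofs in those references, so the proposal is correct and consistent with the paper's (citation-only) approach.
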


{\bf Proof of Theorem \ref{thm2}}. If we let $x=1-1/p$ and $s=1/q$, then by definition
$$\pi_{p,q}=\frac{2\Gamma(x)\Gamma(1+s)}{\Gamma(s+x)}.$$
By Lemma \ref{ineq}(4) we get
$$\frac{2}{q}\,\Gamma(s)\left(\frac{p}{p-1}\right)^{1/q}<\pi_{p,q}
<\frac{2}{q}\,\Gamma(s)\left(\frac{pq+p-q}{q(p-1)}\right)^{1-1/q}\left(\frac{p}{p-1}\right)^{1/q}.$$
Now (1) follows if we use $\Gamma(1+x)=x\,\Gamma(x)$ and Lemma \ref{ineq}(1).
From \cite[6.1.18]{AS} we get
\begin{eqnarray*}
\pi_{p',p}&=&2\frac{\Gamma(1/p)\Gamma(1/p)}{p\Gamma(2/p)}=
2\frac{\Gamma(1/p)\Gamma(1+1/p)}{\Gamma(2/p)}\\
&=&2^{2-2/p}\sqrt{\pi}\frac{\Gamma(1+1/p)}{\Gamma(1/2+1/p)}\,,
\end{eqnarray*}
and (2) follows from Lemma \ref{ineq}(2) if we take $x=1/p$ and $s=1/2$. 

For (3), we see that
$$\pi_{p,p'}= \frac{2x\Gamma(x)^2}{\Gamma(2x)}
= \frac{2^{2-2x}\sqrt{\pi}x\Gamma(x)^2}{\Gamma(x)\Gamma(1/2+x)}\\
=\frac{2^{2-2x}\sqrt{\pi}\Gamma(1+x)}{\Gamma(1/2+x)},$$
and the lower bound follows from Lemma \ref{ineq}(2), and the upper bound follows if we replace $b=x+1$ and $a=x+s$ with $s=1/2$ in \ref{ineq}(3).
 \hfill $\square$

\begin{remark} {\rm For the benefit of an interested
reader we give an algorithm for the numerical
 computation of $\sin_{p,q}$  with the help of Mathematica$^{\tiny\textregistered}$  \cite{ru}.
The same method also applies to  $\sinh_{p,q}\,.$

\bigskip
\small
{\tt

arcsinp[p\_, q\_, x\_] := x * Hypergeometric2F1[1/p, 1/q, 1 + 1/q, x\^{ }p]

sinp[p\_, q\_, y\_] := x /. FindRoot[arcsinp[p, q, x] == y, \{x, 0.5 \}].

}
\normalsize

\bigskip

In the following tables we use the values of $p=2.5$ and $q=3$.}

\end{remark}
\bigskip

\begin{displaymath}
\begin{array}{|c|c|c|c|}
\hline
x&{\rm arcsin_{p,q}}(x)&{\rm arccos_{p,q}}(x)&{\rm arsinh_{p,q}}(x)\\
\hline

0.0000 & 0.0000 & 1.2748& 0.0000   \\

0.2500&0.2504&1.2048&0.2496\\

0.5000&0.5066&1.0688&0.4940\\

0.7500&0.7887&0.8536&0.7227\\

1.0000&1.2748&0.0000&0.9262\\

\hline
\end{array}
\end{displaymath}

\bigskip

\begin{displaymath}
\begin{array}{|c|c|c|c|}
\hline
x&{\rm sin_{p,q}}(x)&{\rm cos_{p,q}}(x)&{\rm sinh_{p,q}}(x)\\
\hline

0.0000 & 0.0000 & 1.0000 & 0.0000   \\

0.2500&0.2496&0.9937&0.2504\\

0.5000&0.4937&0.9500&0.5063\\

0.7500&0.7183&0.8309&0.7817\\

1.0000&0.8995&0.5943&0.1003\\

\hline
\end{array}
\end{displaymath}

\bigskip

{\sc Acknowledgements. } The work of the first author was supported
by the Academy of Finland, Project 2600066611 coordinated
by the second author. The authors are indebted to the referee for a
number of useful remarks.

\end{document}